\newtheorem{remark}{Remark}[section] 
\newtheorem{theorem}{Theorem}[section] 
\newcommand{\boxedeq}[1]{\begin{equation}\boxed{#1}\end{equation}}
\newlist{questions}{enumerate}{1}
\setlist[questions]{label=(Q\arabic*), leftmargin=*, align=left}
\newenvironment{keywords}
{
    \par\noindent\textbf{Key words: }
}
{
    \par
}
\newenvironment{AMS}
{
    \par\noindent\textbf{AMS subject classifications. }
}
{
    \par
}
\title{A Fully Parallelizable Loosely Coupled Scheme for Fluid-Poroelastic Structure Interaction Problems}
\author{Shihan Guo\thanks{School of Mathematical Sciences, East China Normal University, Shanghai 200241, China 
  (hnguosh@gmail.com, bill950204@126.com).}
\and Yizhong Sun\footnotemark[1]  
\and Yifan Wang\thanks{Corresponding author. Department of Mathematics and Statistics, Texas Tech University, Lubbock, TX 79409, USA 
(Yifan.Wang@ttu.edu).}
\and Xiaohe Yue\thanks{Corresponding author. School of Mathematical Sciences, East China Normal University, Shanghai 200241, China (yuexiaohe666@gmail.com)}
\and Haibiao Zheng\thanks{School of Mathematical Sciences, Ministry of Education Key Laboratory of Mathematics and Engineering Applications, Shanghai Key Laboratory of PMMP,  East China Normal University, Shanghai 200241, China (hbzheng@math.ecnu.edu.cn).}
}
\begin{document}
\date{}
\maketitle

\begin{abstract}
We investigate the fluid-poroelastic structure interaction problem in a moving domain, governed by Navier-Stokes-Biot (NSBiot) system. First, we propose a fully parallelizable, loosely coupled scheme to solve the coupled system. At each time step, the solution from the previous time step is used to approximate the coupling conditions at the interface, allowing the original coupled problem to be fully decoupled into seperate fluid and structure subproblems, which are solved in parallel. Since our approach utilizes a loosely coupled scheme, no sub-iterations are required at each time step. Next, we conduct the energy estimates of this splitting method for the linearized problem (Stokes-Biot system), which demonstrates that the scheme is unconditionally stable without any restriction of the time step size from the physical parameters. Furthermore, we illustrate the first-order accuracy in time through two benchmark problems. Finally, to demonstrate that the proposed method maintains its excellent stability properties also for the nonlinear NSBiot system, we present numerical results for both $2D$ and $3D$ NSBiot problems related to real-world physical applications.
\end{abstract}

\begin{keywords}
Fluid-poroelastic structure interaction, loosely coupled scheme, parallelization, domain decomposition method, Navier-Stoke-Biot problem, non-conforming mesh.
\end{keywords}

\begin{AMS}
  65M22,	65M60, 74F10, 76D05, 76S05
\end{AMS}
\section{Introduction}

The Navier-Stokes-Biot (NSBiot) model represents a sophisticated fusion of fluid dynamics, as governed by the Navier-Stokes equations, with poroelasticity, as described by the Biot equations \cite{biot1941general, biot1955theory}. This integrated model is specifically designed to capture the complex interplay between fluid flow and the deformation of a poroelastic medium. By coupling these two domains, the NSBiot model effectively simulates scenarios where fluid dynamics influence the deformation within poroelastic structure, and conversely, where the deformation of the medium impacts fluid flow and filtration process. This model has gained considerable attention in recent years due to its versatility and applicability across various fields. In geophysics, for example, the NSBiot model is crucial for understanding phenomena such as groundwater flow and soil stability, providing insights into how fluid movement can alter the mechanical properties of geological formations \cite{Benjamin2014, doi:10.1137/S1064827503429363}. In biomedical engineering, the model plays an essential role in simulating blood flow through tissues and organs, offering valuable perspectives on how the interaction between fluid and tissue mechanics influences physiological processes \cite{BADIA20097986, fluids7070222, Martina2014}. 

Over the past decade, significant advancements have been made in both the theoretical and numerical aspects of the NSBiot model \cite{BADIA20097986}. These developments have enhanced our understanding of fluid-structure interactions in poroelastic media, contributing to more accurate and efficient simulations. Here, we only provide a partial list of relevant work. Theoretical research has addressed critical aspects such as the existence of weak solutions in fluid poroviscoelastic media, as demonstrated by Kuan and Canic, who also tackled the nonlinear coupling at the interface \cite{osti_10502619}. Cesmelioglu analysed the weak formulation for NSBiot problems under the infinitesimal deformation circumstance, proofing existence and uniqueness of the solution, and providing a priori estimate \cite{cesmelioglu2017analysis}. Yotov and colleagues have made substantial contributions by developing an augmented dual mixed finite element formulation for the NSBiot problem, which includes the weak imposition of coupling conditions and establishes the existence and uniqueness of solutions for this formulation \cite{Yotov2023}. Additionally, they introduced a cell-centered finite volume approach based on a fully mixed formulation, which incorporates weakly symmetric stresses \cite{Yotov2020_fv}. From a numerical standpoint, two primary approaches have been employed to solve the NSBiot problem: monolithic solvers  \cite{fluids7070222, ambartsumyan2018lagrange, wen2020strongly,Buka202404} and partitional solvers based on domain decomposition techniques  \cite{BUKAC2015138, fluids7070222, guo2022decoupled, cesmelioglu2016optimization, MartinaOyekole, seboldt2021numerical}. 

Monolithic solvers are advantageous in handling complex interactions and coupling conditions within the model, particularly in scenarios involving significant added mass effects. In \cite{fluids7070222}, Wang et al. designed a second-order accurate FPSI monolithic solver, which is shown to be unconditionally stable, to study blood plasma flow in a prototype model of an implantable Bioartificial Pancreas. Ambartsumyan and Khattatov proposed a monolithic numerical solver for coupled Stokes-Biot model by employing a Lagrange multiplier method to impose the interaction conditions \cite{ambartsumyan2018lagrange}. In \cite{wen2020strongly}, an interior penalty discontinuous Galerkin method for the rearranged Stokes-Biot system was developed based on mixed finite element method. Furthermore, stability and priori error estimates were derived by Wen and He in \cite{wen2020strongly}. However, they come with challenges such as non-modular implementation and high memory requirements due to the simultaneous solving of all unknown variables. 

On the other hand, domain decomposition techniques, which include partitioned methods, offer a more modular approach by solving the fluid and structural subproblems separately. Given that both quick algorithms for (Navier-)Stokes equations \cite{guermond2006overview, wittum1989multi, li2013nonoverlapping} and parameter-stable methods for Biot equations \cite{adler2020robust, zeng2019h, gu2023iterative} have been studied for years, solving two subproblems independently may enable ones to leverage legacy codes and foundational work of predecessors. These methods typically involve iterative coupling through transmission conditions but may face stability issues due to added-mass effects, necessitating special treatments like implicit partitioned procedures or loosely coupled schemes to ensure accurate and stable solutions \cite{Martina2014}. 
A decoupled stabilized finite element method for the unsteady NSBiot problem was proposed by using the lowest equal-oeder finite elements in the work of Guo and Chen \cite{guo2022decoupled}. Cesmelioglu and Lee developed a decoupling approach by formulating the coupled NSBiot system as a constrained optimization problem, utilizing a Neumann-type control to enforce the continuity of normal stress across the interface \cite{cesmelioglu2016optimization}.
Notably, Bukac et al. have advanced the numerical analysis of fluid-structure interactions within poroelastic and poroviscoelastic materials by introducing two non-iterative Robin-Robin partitioned schemes for Stokes-Biot problems in fixed domain that achieve second-order temporal accuracy \cite{MartinaOyekole}. In addition, Bukac et al. also proposed two partitioned and loosely coupled methods based on the generalized Robin-Robin boundary conditions for NSBiot problems in moving domains \cite{seboldt2021numerical}. The partitioned procedures based on Robin-Robin conditions have shown particular promise in addressing large added-mass effect problems, making them a valuable tool in the numerical simulation of complex fluid-structure interactions.

Our work has been motivated by a series of open questions regarding the splitting methods mentioned below:
\begin{questions}
    \item Decoupled methods and the Robin-Robin type schemes have been extensively studied for Stokes-Darcy system over the past few decades \cite{chen2011parallel, sun2021domain, shi2023ensemble}. Given that coupling conditions for NSBiot problems are reminiscent of interface conditions in Stokes-Darcy system, could those established approaches offer valuable insights for our work? \label{Q1}
    \item Domain decomposition schemes are highly effective for accelerating computation and designing parallel algorithms, as they divide the original problem into smaller, manageable subproblems. After answering \ref{Q1}, could we leverage those techniques to design a parallelizable, loosely coupled scheme for FPSI problem? \label{Q2}
    \item The energy estimates for some existing methods rely on the viscoelastic model assumption \cite{seboldt2021numerical}. Following immediately from \ref{Q2}, is it possible to derive a strong stability for this parallel scheme without relying on viscoelasticity assumption? \label{Q3}
    \item We utilize an implicit scheme based on backward Euler to discretize the time derivatives in our NSBiot system. In terms of accuracy for the proposed scheme in \ref{Q2}, can we achieve the desired temporal first-order accuracy without requiring sub-iterations at each time step? \label{Q4}
\end{questions}
In the following sections, we will provide clear answers to all of the questions mentioned above. To model fluid flow in a moving domain, we utilize the Navier-Stokes (NS) equations in the Arbitrary Lagrangian-Eulerian (ALE) formulation. Building on the work of He et al. \cite{cao2014parallel}, we propose a parallel, non-iterative decoupling scheme for solving the NS-Biot problem, offering several advantages over traditional partitioned methods. First, our approach employs a loosely coupled scheme, significantly reducing the computational cost typically associated with sub-iterations. Second, it allows for the fluid and structure subproblems to be solved in parallel, substantially enhancing computational speed and efficiency, particularly suitable for large-scale simulations. Moreover, our scheme exhibits superior stability property, remaining robust even in the presence of challenges such as large added-mass effects. Unlike existing methods that require restrictive time step, our stability analysis shows that our scheme supports larger time steps, with the time step being independent of problem parameters like the storage coefficient $C_0$. Lastly, numerical experiments on benchmark problems demonstrate that the method achieves first-order accuracy in time for Stokes-Biot problems.

The rest of the paper is organized as follows. In Section \ref{sec:main2}, we introduce the mathematical model for NSBiot problems. Section \ref{sec:main3} presents the derivation of our parallel and loosely coupled numerical scheme, answering \ref{Q1} and \ref{Q2}. In Section \ref{sec:main4}, we demonstrate the unconditional stability of the proposed scheme in a linearized model scenario, giving an answer to \ref{Q3}. For the last question \ref{Q4}, two benchmark numerical examples are presented in Section \ref{sec:main5} to show the accuracy of the proposed scheme. Additionally, $2D$ and $3D$ examples are provided to further illustrate the method's stability and robustness. Finally, Section \ref{sec:main6} concludes this paper with several future research prospects.
\section{Problem description}
\label{sec:main2}
\subsection{Computational domains and mappings} 
We consider the interaction between an incompressible, Newtonian fluid and a poroelastic material in a moving domain. As shown in Figure \ref{fig:sketch}, let $\hat\Omega$ represents the union of the reference domain for the fluid subproblem $\hat\Omega_f$, the reference domain for the poroelastic material $\hat\Omega_p$, and the interface $\hat\Gamma$ that separates the fluid and poroelastic medium:
$\hat\Omega=\hat\Omega_f\cup\hat\Omega_p\cup\hat\Gamma$. The fluid and Biot reference external boundaries are denoted by $\hat\Sigma^D_f\cup\hat\Sigma^N_f$ and $\hat\Sigma^D_p\cup\hat\Sigma^N_p$, respectively. Since the porous medium is elastic and deformable, these domains will evolve over time, leading to the time-dependent domain $\Omega(t)=\Omega_f(t)\cup\Omega_p(t)\cup\Gamma(t)$. We assume that both regions are regular, bounded domains in $\mathbb R^d$ for either $d=2$ or 3. 
\begin{figure}[ht]
    \centering
    \includegraphics[width=0.5\linewidth]{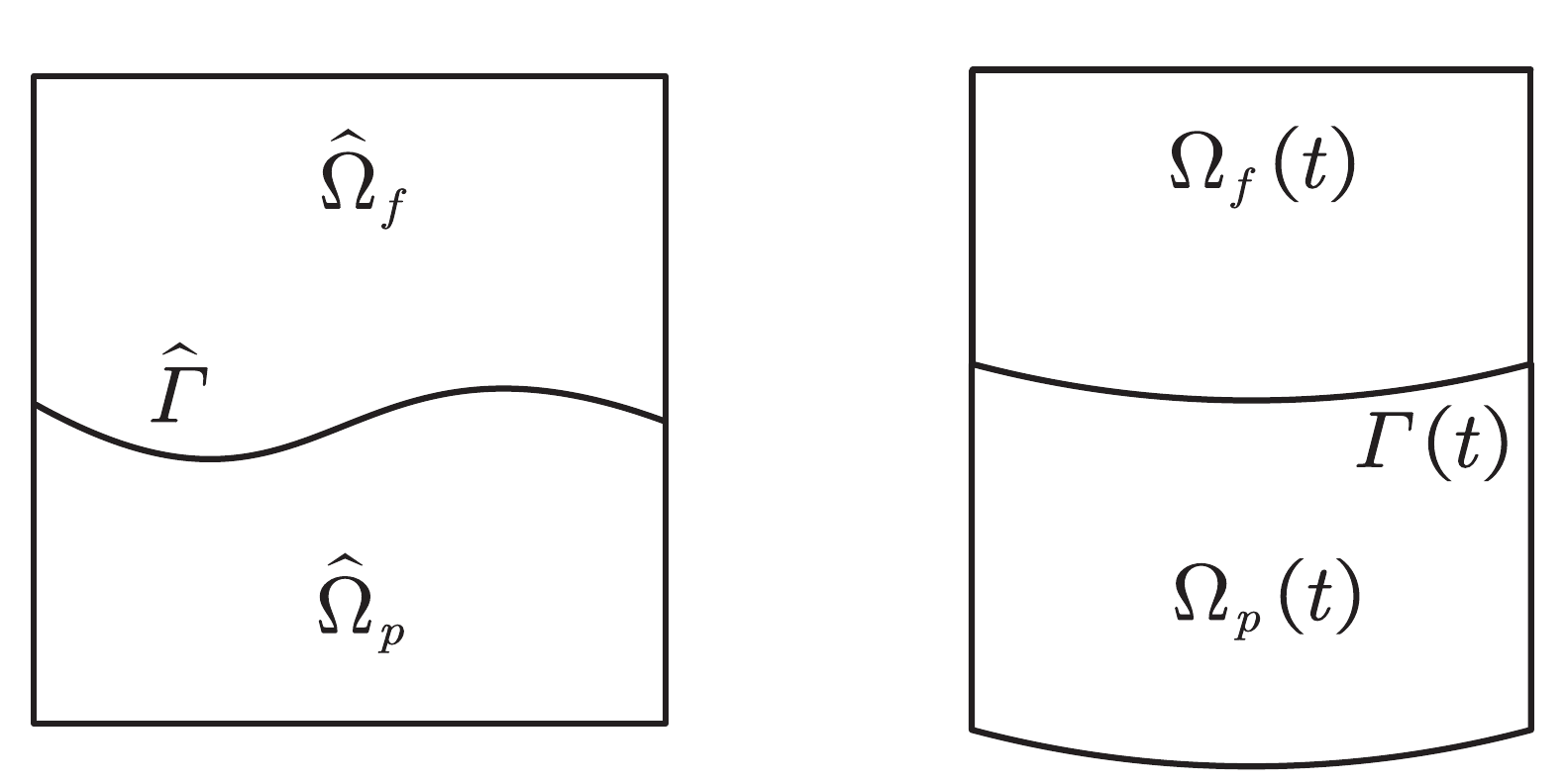}
    \caption{A sketch of the fluid-poroelastic structure interaction domain.}
    \label{fig:sketch}
\end{figure}

Let $\hat{\boldsymbol\eta}:[0, T]\times\hat\Omega_p\to\mathbb{R}^d$ be the displacement of the porous medium. We introduce the Lagrangian map to establish a connection between the reference domain $\hat\Omega_p$ and current configuration $\Omega_p(t)$ as follows:
$$
\boldsymbol\Phi_p(\hat{\boldsymbol{x}}, t)=\hat{\boldsymbol{x}}+\hat{\boldsymbol{\eta}}(\hat{\boldsymbol{x}}, t) \quad \text { for all } \hat{\boldsymbol{x}} \in \hat{\Omega}_P, ~t \in[0, T].
$$
The Lagrangian map is assumed to be smooth and invertible. In this case, for any function $\hat h:\hat{\Omega}_p\times[0, T]\to\mathbb R^d$, we introduce its counterpart in the Eulerian frame as $h=\hat h\circ\boldsymbol{\Phi}_p^{-1}$, defined as follows:
$$
h:\Omega_p(t)\to\mathbb R^d, \quad h(x, t) = \hat{h}\left(\boldsymbol{\Phi}_p^{-1}(x, t), t\right).
$$
To monitor the deformation of the fluid domain over time, we introduce a family of smooth and invertible Arbitrary Lagrangian Eulerian (ALE) mappings $\boldsymbol{\Phi}_f: \hat{\Omega}_f \rightarrow \Omega_f(t)$ that map the fixed reference domain onto the current, physical configuration as:
$$
\boldsymbol{\Phi}_f^t(\hat{\boldsymbol{x}})=\hat{\boldsymbol{x}}+\hat{\boldsymbol{\eta}}_f(\hat{\boldsymbol{x}}, t) \quad \text { for all } \hat{\boldsymbol{x}} \in \hat{\Omega}_f,~ t \in[0, T] \text {, }
$$
where $\hat{\boldsymbol{\eta}}_f$ represents the fluid domain displacement such that $\hat{\boldsymbol \eta}=\hat{\boldsymbol \eta}_f$ on $\hat{\Gamma}$.
In terms of the construction for the ALE mappings, the displacement $\hat{\boldsymbol{\eta}}_f$ can be extended arbitrarily from interface $\hat{\Gamma}$ into fluid domain $\hat\Omega_f$. In this work, we take the prevailing harmonic extension, i.e.,
\begin{equation}
    \hat\Delta\hat{\boldsymbol{\eta}}_f = 0 \quad\text{in }\hat\Omega_f, \quad \hat{\boldsymbol{\eta}}_f = \hat{\boldsymbol{\eta}}(t) \quad\text{on } \hat{\Gamma},\quad \hat{\boldsymbol{\eta}}_f = \boldsymbol{0} \quad\text{on }\partial\hat\Omega_f\setminus\hat\Gamma.
\end{equation}
to obtian a series of ALE mappings $\boldsymbol{\Phi}_f^t$. For any function $f: \Omega_f(t) \times[0, T] \rightarrow \mathbb{R}^d$, we denote its counterpart in the ALE frame as $\hat{f}=f \circ\boldsymbol\Phi_f^t$, defined as follows:
$$
\hat{f}: \hat{\Omega}_f \times[0, T] \rightarrow \mathbb{R}^d, \quad \hat{f}(\hat{\boldsymbol{x}}, t)=f\left(\boldsymbol{\Phi}_f^t(\hat{\boldsymbol{x}}, t), t\right) .
$$

\begin{remark}
In the remainder of this paper, we denote all entities defined in the reference domain with a hat $\hat{\cdot}$ and use the same notation without the hat for the Eulerian variables.
\end{remark}

\subsection{Fluid equation}
We consider the fluid to be an incompressible, viscous Newtonian fluid. The Navier-Stokes equations in the ALE framework are given by:
\begin{equation}
\left\{
\begin{aligned}
    & \rho_f\left(\left.\partial_t \boldsymbol{u}\right|_{\hat{\Omega}_f}+(\boldsymbol{u}-\boldsymbol{w}) \cdot \nabla \boldsymbol{u}\right)
    =\nabla \cdot \sigma_f\left(\boldsymbol{u}, p\right)+\boldsymbol{F}_f \quad&&\text{in}\ \Omega_f(t)\times(0, T),\\
    &\nabla \cdot \boldsymbol{u}=0\quad&&\text{in}\ \Omega_f(t)\times(0, T),\\
    & \boldsymbol{\sigma}_f \boldsymbol{n}_f=\boldsymbol{g} \quad&&\text{on}\ \Sigma_f^N(t) \times(0, T),\\
    & \boldsymbol{u}=\boldsymbol{0} \quad&&\text{on}\ \Sigma_f^D(t) \times(0, T),
\end{aligned}
\right.
\label{Navier-Stokes}
\end{equation}
where $\boldsymbol{u}$ denotes the fluid velocity, $\boldsymbol{w}$ is the domain velocity. $\rho_f$ is the fluid density, $\boldsymbol{n}_f$ is the outward normal vector, and $\boldsymbol{F}_f$ and $\boldsymbol{g}$ are the forcing terms. For a Newtonian fluid, the stress tensor $\boldsymbol{\sigma}_f$ is given by $\boldsymbol{\sigma}_f\left(\boldsymbol{u}, p\right)=-p \boldsymbol{I}+2 \mu_f \mathbb{D}(\boldsymbol{u})$, where $p_f$ is the fluid pressure, $\mu_f$ is the fluid viscosity, and $\mathbb{D}(\boldsymbol{u})=$ $\left(\nabla \boldsymbol{u}+(\nabla \boldsymbol{u})^T\right) / 2$ is the strain rate tensor.
\subsection{Structure equation}
We assume that the structure material is isotropic and homogeneous. Let $\hat\phi$ denote the fluid pore pressure, $\hat{\boldsymbol{u}}_p$ the filtration velocity of the fluid relative to the velocity of the solid and $\mathbb K$ the hydraulic conductivity tensor; $\hat{\boldsymbol{\eta}}$ denotes the solid displacement, $\hat{\boldsymbol{\xi}}=\partial_t\hat{\boldsymbol{\eta}}$ the solid velocity, $C_0$ the storativity coefficient, and $\alpha$ the Biot-Willis parameter. Then, the poroelastic structure assuming infinitesimal deformation can be described by the Biot model in the reference domain $\hat \Omega_p$ as:
\begin{equation}
\label{Biot}
\left\{
    \begin{aligned}
        &\rho_p\frac{\partial^2\hat{\boldsymbol{\eta}}}{\partial t^2} = \hat\nabla\cdot\hat{\boldsymbol{\sigma}}_p(\hat{\boldsymbol{\eta}},\hat\phi) \quad&&\text{in}\ \hat\Omega_P\times(0, T), \\
        & \hat{\boldsymbol{u}}_p = -\mathbb K\hat\nabla\hat\phi \quad&& \text{in}\ \hat\Omega_P\times(0, T), \\
        &C_0\frac{\partial\hat\phi}{\partial t} + \alpha\hat\nabla\cdot\hat{\boldsymbol{\xi}} - \hat\nabla\cdot(\mathbb K\hat\nabla\hat\phi) = \hat{F}_d \quad&& \text{in}\ \hat\Omega_P\times(0, T), \\
        &\hat{\boldsymbol{\sigma}}_p\hat{\boldsymbol{n}}_p = 0 \quad&&\text{on}\ \hat\Sigma_{p}^{N}\times(0, T), \\
        &\hat{\boldsymbol{\eta}} = \partial_{t}\hat{\boldsymbol{\eta}}= 0 \quad&&\text{on}\ \hat\Sigma_{p}^{D}\times(0, T),
    \end{aligned}
    \right.
\end{equation}
where $\hat{\boldsymbol{\sigma}}_p = \hat{\boldsymbol{\sigma}}_s(\hat{\boldsymbol{\eta}})-\alpha\hat\phi\boldsymbol{I}$ denotes the Cauchy stress tensor for the poroelastic structure; The elastic stress tensor $\hat{\boldsymbol{\sigma}}_s$ is characterized by the Saint Venant-Kirchhoff constitutive model
$$
\hat{\boldsymbol{\sigma}}_s(\hat{\boldsymbol{\eta}})=2\mu_p\hat{\mathbb D}(\hat{\boldsymbol{\eta}})+\lambda_p\hat\nabla\cdot\hat{\boldsymbol{\eta}}\boldsymbol{I},
$$
where $\mu_p$ and $\lambda_p$ are Lamé parameters. $\hat F_d$ is the volumetric source term. The hydraulic conductivity tensor $\mathbb K$ is assumed to be uniformly bounded and uniformly elliptic, i.e., for $\forall x\in \hat{\Omega}_p$ and $\forall \boldsymbol z\in\mathbb R^d$, there exist positive constants $K_m$ and $K_M$ such that
\begin{equation}
    K_m\Vert \boldsymbol z\Vert^2\le \boldsymbol z^T\mathbb K(x) \boldsymbol z\le K_M\Vert \boldsymbol z\Vert^2.
    \label{uniformly}
\end{equation}

To describe the Cauchy stress tensor $\hat{\boldsymbol{\sigma}}_p$ in Eulerian system, one applies the Piola transformation to reformulate its counterpart as follows:
$$
\boldsymbol{\sigma}_p = \hat{J}^{-1}\hat{\boldsymbol{\sigma}}_p\hat{\boldsymbol{F}}^T,
$$
where $\hat{\boldsymbol{F}}$ and $\hat{J}$ represent the deformation gradient and corresponding Jacobian of the Lagrangian map $\boldsymbol{\Phi}_p$ respectively. As mentioned before that the deformation of structure is assumed to be infinitesimal, the deformation gradient $\hat{\boldsymbol{F}}$ is therefore so small that we approximate $\hat{\boldsymbol{F}}=I$ and $\hat{J} = 1$ \cite[Section 2.3]{richter2017fluid}. By this simplification, we have the relationship as follows:
\begin{equation}
    \boldsymbol{\sigma}_p = \hat{\boldsymbol{\sigma}}_p\circ\boldsymbol{\Phi}_p^{-1}.
\end{equation}

\subsection{Coupling condition}
To couple the fluid model and the Biot model, we impose a set of coupling conditions on interface $\Gamma(t)$ as follows:
\begin{subequations}
\begin{align}
    &(\boldsymbol{\xi} + \boldsymbol{u}_p)\cdot\boldsymbol{n}_f = \boldsymbol{u}\cdot\boldsymbol{n}_f \quad&&\text{on}\ \Gamma(t)\times(0, T), \label{interface1}\\
    &\boldsymbol{\tau}_{f, j}\cdot\boldsymbol{\sigma}_f\boldsymbol{n}_f = -\gamma(\boldsymbol{u} - \boldsymbol{\xi})\cdot\boldsymbol{\tau}_{f, j}\quad \text{for } j = 1,\cdots, d-1 \quad&&\text{on}\ \Gamma(t)\times(0, T),\label{interface2}\\
    &\boldsymbol{n}_f\cdot\boldsymbol{\sigma}_f\boldsymbol{n}_f = -\phi\quad&&\text{on}\ \Gamma(t)\times(0, T), \label{interface3}\\
    &\boldsymbol{\sigma}_f\boldsymbol{n}_f =  \boldsymbol{\sigma}_p\boldsymbol{n}_f\quad&&\text{on}\ \Gamma(t)\times(0, T)\label{interface4},
\end{align}
\label{interface condition}
\end{subequations}
where $\boldsymbol{\tau}_{f, j}$ denotes an orthonormal sets of unit vectors on the tangential plane to $\Gamma(t)$; $\gamma > 0$ denotes the slip rate in the Beavers-Joseph-Saffman (BJS) interface condition.  

\subsection{Weak Formation}
For the Navier-Stokes-Biot system given by (\ref{Navier-Stokes}), (\ref{Biot}) and (\ref{interface condition}), we consider the following functional spaces for all $t\in[0, T]$: 
\begin{align*}
V_{f}(t) &= \{\boldsymbol{v}:\Omega_{f}(t)\rightarrow \mathbb{R}^{d}\ \big{|}\   \boldsymbol{v} = \hat{\boldsymbol{v}}\circ\left(\boldsymbol{\Phi}_{f}^t\right)^{-1}, \hat{\boldsymbol{v}}\in(H^{1}(\hat{\Omega}_f))^{d}, \hat{\boldsymbol{v}} = \boldsymbol{0}\ \text{on}\ \hat{\Sigma}_f^D\},\\%
Q_{f}(t) &= \{ q:\Omega_{f}(t)\rightarrow \mathbb{R}\ \big{|}\   q = \hat{q}\circ\left(\boldsymbol{\Phi}_{f}^t\right)^{-1}, \hat{p}\in(L^{2}(\hat{\Omega}_f)) \},\\%
\hat{V}_{p} &= \{ \hat{\boldsymbol{\zeta}}:\hat{\Omega}_{p}\rightarrow \mathbb{R}^{d}\ \big{|}\, \hat{\boldsymbol{\zeta}}\in(H^{1}(\hat{\Omega}_{p}))^{d}, \hat{\boldsymbol{\zeta}} = \boldsymbol{0}\ \text{on}\ \hat{\Sigma}_p^D \},\\%
\hat{Q}_{p} &=\{\hat{\psi}:\hat{\Omega}_{p}\rightarrow \mathbb{R}\ \big{|} \  \hat{\psi}\in(H^{1}(\hat{\Omega}_{p})), \hat{\psi} = 0\ \text{on}\ \hat{\Sigma}_p^D \},\\
\hat{X}_{p} &= \{ \hat{\boldsymbol{v}}_p:\hat{\Omega}_{p}\rightarrow \mathbb{R}^{d}\ \big{|}\, \hat{\boldsymbol{v}}_p\in(L^{2}(\hat{\Omega}_{p}))^{d} \},
\end{align*}
where $H^{1}(\Omega_f)$ and $H^1(\Omega_p)$ denote the usual Sobolev spaces. For simplicity of notation, let $(\cdot,\cdot)_f$ and $(\cdot,\cdot)_p$ denote integration on the current fluid domain and reference Biot domain respectively; let $\langle\cdot,\cdot\rangle_\Gamma$ and $\langle\cdot,\cdot\rangle_{\hat\Gamma}$denotes the boundary integration on the interface $\Gamma(t)$ and $\hat{\Gamma}$ respectively. In addition, let $P_f$ denote the projection from the fluid domain onto the tangential space on $\Gamma(t)$, such that:
$$
    P_f(\boldsymbol{u}) = \sum_{j=1}^{d-1}(\boldsymbol{u}\cdot\boldsymbol{\tau}_{f, j})\boldsymbol{\tau}_{f,j}. 
$$
Similarly, $\hat P_p$ is defined to denote the projection from the reference poroelastic domain onto the tangential space on $\hat\Gamma$.

With these notations, the weak formulation of the coupled Navier-Stokes-Biot problem is given as follows: find $\boldsymbol{u}\in V_f(t)$, $p\in Q_f(t)$, $\boldsymbol{\hat{\xi}}=d_t\boldsymbol{\hat{\eta}}\in\hat{V}_{p}$ and $\hat{\phi}\in \hat{Q}_{p}$ such that
\begin{equation}
    \begin{aligned}
        &\rho_f\left[\left(\left.\frac{\partial\boldsymbol{u}}{\partial t}\right|_{\hat{\boldsymbol{x}}}, \boldsymbol{v}\right)_f 
        + \left( (\boldsymbol{u-\omega})\cdot\nabla\boldsymbol{u},\boldsymbol{v}\right)_f\right]
        +2\mu_{f}\left(\mathbb D(\boldsymbol{u}) ,\mathbb D(\boldsymbol{v})\right)_f
        - \left(p, \nabla\cdot\boldsymbol{v}\right)_f\\
        &+ \left(\nabla\cdot\boldsymbol{u}, q\right)_f+ \left\langle\phi,\boldsymbol{v}\cdot\boldsymbol{n}_{f}\right\rangle_\Gamma
        + \gamma\left\langle P_{f}\left(\boldsymbol{u}-\boldsymbol{\xi}\right), P_{f}\left(\boldsymbol{v}\right)\right\rangle_\Gamma\\
        &+ \rho_{p}\left(\frac{\partial\boldsymbol{\hat\xi}}{\partial t}, 
        \boldsymbol{\hat\zeta}\right)_p
        + 2\mu_{p}\left(\hat{\mathbb D}(\boldsymbol{\hat\eta}), \hat{\mathbb D}(\boldsymbol{\hat\zeta})\right)_p 
        + \lambda_{p}\left(\hat\nabla\cdot\boldsymbol{\hat\eta}, \hat\nabla\cdot\boldsymbol{\hat\zeta}\right)_p
        -\alpha\left(\hat\phi, \hat\nabla\cdot\boldsymbol{\hat\zeta}\right)_p \\
        &+ C_0\left(\frac{\partial\hat\phi}{\partial t},\hat\psi\right)_p
        + \alpha\left(\hat\nabla\cdot\hat{\boldsymbol{\xi}}, \hat\psi\right)_p
        + \left(\mathbb{K}\hat\nabla\hat\phi, \hat\nabla\hat\psi\right)_p \\
        &- \gamma\left\langle \hat P_{p}\left(\hat{\boldsymbol{u}}-\hat{\boldsymbol{\xi}} \right) , \hat P_{p}\left(\hat{\boldsymbol{\zeta}}\right)\right\rangle_{\hat\Gamma}
        + \left\langle\left(\hat{\boldsymbol{\xi}} - \hat{\boldsymbol{u}}\right)\cdot\hat{\boldsymbol{n}}_{p}, \hat\psi\right\rangle_{\hat\Gamma}+ \left\langle\hat\phi ,\hat{\boldsymbol{\zeta}}\cdot\hat{\boldsymbol{n}}_{p}\right\rangle_{\hat\Gamma} \\
        &= \left(\boldsymbol{F}_{f},\boldsymbol{v} \right)_f
        + \left(F_{d},\psi \right)_p, 
        \qquad \forall \left(\boldsymbol{v}, q, \hat{\boldsymbol{\zeta}}, \hat\psi\right) \in\boldsymbol{V}_f(t)\times Q_{f}(t)\times\hat{\boldsymbol{V}}_{p}\times \hat{Q}_{p}.
\end{aligned}
\label{coupled}
\end{equation}
\section{The loosely coupled splitting scheme}\label{sec:main3}
\subsection{Robin-type boundary conditions}
In order to solve the coupled Naiver-Stokes-Biot problem utilizing domain decomposition, we introduce Robin bou

ndary conditions for the NS and Biot system. We denote solutions of our decoupled method with a subscript $\cdot_r$. 
To formulate  Robin-type boundary conditions for normal component and tangential component, we introduce the following two boundary conditions for fluid sub-problem:
\begin{align}
&\boldsymbol{n}_f\cdot\boldsymbol{\sigma}_{fr}\boldsymbol{n}_f + L_1\boldsymbol{u}_r\cdot\boldsymbol{n}_f = R_1 \quad&&\text{on }\Gamma(t)\label{R1}\\
&P_f\left(\boldsymbol{\sigma}_{fr}\boldsymbol{n}_f\right)+\gamma P_f\left(\boldsymbol{u}_r\right)=
    R_{2}
    \quad&& \text{on } \Gamma(t).\label{R2}
\end{align}
where $R_1$ and $R_2$ denotes two known functions defined on $\Gamma(t)$. In this case, the corresponding weak formation for the uncoupled Navier-Stokes system (\ref{Navier-Stokes}) is given by the following: for $R_{1}, R_{2}\in L^{2}(0,T;L^{2}(\Gamma(t)))$, find $\boldsymbol{u}_r\in V_{f}(t)$, $p_r\in Q_{f}(t)$ such that:
\begin{equation}
\begin{aligned}
    &\rho_f\left(\left.\frac{\partial\boldsymbol{u}_r}{\partial t}\right|_{\hat{\boldsymbol{x}}}, \boldsymbol{v}\right)_f 
        + \rho_f\left( (\boldsymbol{u}_r-\boldsymbol{\omega})\cdot\nabla\boldsymbol{u}_r,\boldsymbol{v}\right)_f
        +2\mu_{f}\left(\mathbb D(\boldsymbol{u}_r) ,\mathbb D(\boldsymbol{v})\right)_f
        - \left(p_r, \nabla\cdot\boldsymbol{v}\right)_f\\
        &+ L_{1}\left\langle\boldsymbol{u}_r\cdot\boldsymbol{n}_f, \boldsymbol{v}\cdot\boldsymbol{n}_f\right\rangle_\Gamma + \gamma\left\langle P_f(\boldsymbol{u}_r), P_f(\boldsymbol{v})\right\rangle_\Gamma
        = \left(\boldsymbol{F}_{f}, \boldsymbol{v}\right)_{f}
        + \left\langle  R_{2}, P_f(\boldsymbol{v})\right\rangle_\Gamma\\
        &+\left\langle R_{1},\boldsymbol{v}\cdot\boldsymbol{n}_f\right\rangle_\Gamma
        +\left\langle \boldsymbol{g},\boldsymbol{v}\right\rangle_{\Sigma_f^N}
        \qquad\forall\boldsymbol{v}\in\boldsymbol{V}_f(t),\\
        &\left(\nabla\cdot\boldsymbol{u}_r, q\right)_f = 0 \qquad\forall q\in Q_f(t).
\end{aligned}
\label{ns_robin1}
\end{equation}
Now, we introduce a general Robin-type boundary condition in normal direction for the first equation, which characterizes the elastic deformation, in Biot system as:
\begin{equation}
\hat{\boldsymbol{n}}_p\cdot\hat{\boldsymbol{\sigma}}_{pr}\hat{\boldsymbol{n}}_p +\hat{\phi}_r + L_2\hat{\boldsymbol{\xi}}_r\cdot\hat{\boldsymbol{n}}_p
=R_{3}\\ 
\quad\text{on } \hat{\Gamma},
\label{R3}
\end{equation}
where $R_3$ above denotes a known function defined on $\hat{\Gamma}$. Another Robin-type condition for the equation characterizing porous flows in Biot system is proposed as:
\begin{equation}
\mathbb{K}\hat\nabla\hat{\phi}_r\cdot\hat{\boldsymbol{n}}_p + L_{3}\hat{\phi}_r - \hat{\boldsymbol{\xi}}_{r}\cdot\hat{\boldsymbol{n}}_p
=R_{4} \quad\text{on } \hat{\Gamma},
\label{R4}
\end{equation}
For the Biot system, by combining coupling condition (\ref{interface2}) and (\ref{interface4}) one yields a Robin-type condition in tangential directions as well:
\begin{equation}
    \hat{P}_p\left(\hat{\boldsymbol{\sigma}}_{pr}\hat{\boldsymbol{n}}_p\right) + \gamma\hat{P}_p\left(\hat{\boldsymbol{\xi}}_r\right) = R_{5} \quad\text{on }\hat\Gamma.
    \label{R5}
\end{equation}
where $R_{4}$ and $R_{5}$ , similar to $R_3$, represents two known functions defined on $\hat{\Gamma}$ as well. After embedding these proposed conditions to Biot equation, we can rewrite the weak formulation for the uncoupled Biot system as: given $R_{3}, R_{4}, R_{5}\in L^2(0, T;L^2(\Gamma))$, find $\hat{\boldsymbol{\xi}}_r=\partial_t\hat{\boldsymbol{\eta}}_r\in\hat{{V}}_{p}$ and $\hat{\phi}_r\in\hat{Q}_{p}$ such that
\begin{align}
        &\rho_{p}\left(\frac{\partial\hat{\boldsymbol{\xi}}_r}{\partial t}, \hat{\boldsymbol{\zeta}}\right)_p
        + 2\mu_{p}\left(\hat{\mathbb{D}}(\hat{\boldsymbol{\eta}}_r), \hat{\mathbb{D}}(\hat{\boldsymbol{\zeta}})\right)_p
        + \lambda_{p}\left(\hat{\nabla}\cdot\hat{\boldsymbol{\eta}}_r, \hat{\nabla}\cdot\hat{\boldsymbol{\zeta}}\right)_p
        -\alpha\left(\hat{\phi}_r, \hat\nabla\cdot\hat{\boldsymbol{\zeta}}\right)_p\nonumber \\
        & + \gamma\left\langle \hat{P}_p\left(\hat{\boldsymbol{\xi}}_r\right),\hat{P}_p\left(\hat{\boldsymbol{\zeta}}\right)\right\rangle_{\hat{\Gamma}} + \left\langle\hat{\phi}_r+L_2 \hat{\boldsymbol{\xi}}_r\cdot\hat{\boldsymbol{n}}_p, \hat{\boldsymbol{\zeta}}\cdot\hat{\boldsymbol{n}}_p\right\rangle_{\hat{\Gamma}}
        = \left\langle R_{3}, \hat{\boldsymbol{\zeta}}\cdot\hat{\boldsymbol{n}}_p\right\rangle_{\hat{\Gamma}}\nonumber \\
&+\left\langle  R_{5}
,\hat{P}_p\left(\hat{\boldsymbol{\zeta}}\right)\right\rangle_{\hat{\Gamma}} \qquad\forall\boldsymbol{\zeta}\in\hat{\boldsymbol{V}}_{p},\label{biot_robin1}\\
        &C_0\left(\frac{\partial\hat{\phi}_r}{\partial t},\hat{\psi}\right)_p
        + \alpha\left(\hat\nabla\cdot\hat{\boldsymbol{\xi}}_r, \hat\psi\right)_p
        + \left(\mathbb{K}\hat\nabla\hat{\phi}_r, \hat\nabla\hat\psi\right)_p
        + L_{3}\left\langle\hat{\phi}_r, \hat\psi\right\rangle_{\hat{\Gamma}}
        = \left(\hat{F}_d, \hat\psi\right)_p\nonumber \\
        &+ \left\langle R_{4}, \hat\psi\right\rangle_{\hat{\Gamma}} \forall\hat\psi\in \hat{Q}_p.\nonumber
\end{align}

It is worth noting that for specified functions $R_1\sim R_5$, weak formulations (\ref{ns_robin1}) and (\ref{biot_robin1}) are \textit{uncoupled}. This characteristic facilitates the development of a decoupled algorithm, enabling the separate computation of these two subproblems. In the following theorem, we will show that for approximate choices of $L_1\sim L_3$ and $R_1\sim R_5$, solutions of the original NSBiot system (\ref{coupled}) are equivalent to solutions of (\ref{ns_robin1}) and (\ref{biot_robin1}), and therefore we may solve the latter problems instead of original ones.

\begin{theorem}[Equivalency]
        \label{Equivalency}
        Assume that the solution of the NSBiot system in moving domain is unique. Let $\left(\boldsymbol{u}, p,\hat{\boldsymbol{\eta}},\hat{\boldsymbol{\xi}},\hat\phi\right)$ be
        the solution of the coupled NSBiot system (\ref{coupled}), and let
        $\left(\boldsymbol{u}_r, p_r,\hat{\boldsymbol{\eta}}_r,\hat{\boldsymbol{\xi}}_r,\hat{\phi}_r \right)$
        be the solution of the decoupled NSBiot system (\ref{ns_robin1}) and (\ref{biot_robin1}) with general Robin-type boundary conditions at the interface. Then $\left(\boldsymbol{u}, p,\boldsymbol{\eta},\boldsymbol{\xi},\phi\right) = 
        \left(\boldsymbol{u}_r, p_r,\hat{\boldsymbol{\eta}}_r,\hat{\boldsymbol{\xi}}_r,\hat{\phi}_r \right)$
        if and only if $R_1, R_2, R_3, R_{4}$ and $R_{5}$ satisfy the  following compatibility conditions 
\begin{subequations}
\label{compatibility}
\begin{align}
    R_{1} & = L_{1}\boldsymbol{u}_r\cdot\boldsymbol{n}_{f} - \phi_r\quad&&\text{on}\ \Gamma(t),\label{c1}\\
    R_{2}  &= \gamma\boldsymbol{P}_{f}(\boldsymbol{\xi}_{r}) \quad&&\text{on}\ \Gamma(t),\label{c2}\\
    R_{3} & = L_2\hat{\boldsymbol{\xi}}_r\cdot\hat{\boldsymbol{n}}_p \quad&&\text{on}\ \hat\Gamma,\\
    R_{4} & = L_{3}\hat\phi_r  - \hat{\boldsymbol{u}}_r \cdot\hat{\boldsymbol{n}}_p 
    \quad&&\text{on}\ \hat\Gamma. \\
    R_{5}  &= \gamma\hat{\boldsymbol{P}}_{p}(\hat{\boldsymbol{u}}_{r})\quad&&\text{on}\ \hat\Gamma.
\end{align}
\end{subequations}
\end{theorem}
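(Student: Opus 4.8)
The plan is to establish the two implications separately, in each case reducing the weak statements to their underlying strong-form interface relations and then exploiting the assumed uniqueness of the coupled problem. The guiding observation is that the artificial Robin parameters $L_1$, $L_2$, $L_3$ enter the decoupled formulations (\ref{ns_robin1}) and (\ref{biot_robin1}) only through penalty-type boundary integrals, and that these integrals are engineered to cancel precisely once the compatibility conditions (\ref{compatibility}) are imposed. Thus the whole argument hinges on showing that, under (\ref{compatibility}), the sum of the two decoupled weak forms coincides term by term with the coupled weak form (\ref{coupled}).

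For the sufficiency direction ($\Leftarrow$), I would substitute the expressions for $R_1,\dots,R_5$ from (\ref{compatibility}) directly into the right-hand boundary integrals of (\ref{ns_robin1}) and (\ref{biot_robin1}). In the fluid form, inserting $R_1 = L_1\boldsymbol{u}_r\cdot\boldsymbol{n}_f - \phi_r$ and $R_2 = \gamma P_f(\boldsymbol{\xi}_r)$ makes the $L_1$-weighted normal term on the left cancel exactly against its counterpart moved over from the right, leaving precisely the coupled interface contributions $\langle\phi_r,\boldsymbol{v}\cdot\boldsymbol{n}_f\rangle_\Gamma + \gamma\langle P_f(\boldsymbol{u}_r-\boldsymbol{\xi}_r),P_f(\boldsymbol{v})\rangle_\Gamma$. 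The same cancellation of the $L_2$- and $L_3$-terms occurs in the Biot form after inserting the remaining three relations of (\ref{compatibility}), recovering the interface terms $\langle\hat\phi_r,\hat{\boldsymbol{\zeta}}\cdot\hat{\boldsymbol{n}}_p\rangle_{\hat\Gamma}$, $\langle(\hat{\boldsymbol{\xi}}_r-\hat{\boldsymbol{u}}_r)\cdot\hat{\boldsymbol{n}}_p,\hat\psi\rangle_{\hat\Gamma}$ and $-\gamma\langle\hat P_p(\hat{\boldsymbol{u}}_r-\hat{\boldsymbol{\xi}}_r),\hat P_p(\hat{\boldsymbol{\zeta}})\rangle_{\hat\Gamma}$, where the Darcy relation $\hat{\boldsymbol{u}}_p=-\mathbb{K}\hat\nabla\hat\phi$ is used to convert the conormal flux appearing through $R_4$. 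Adding the two resulting identities reproduces (\ref{coupled}) exactly, so $(\boldsymbol{u}_r,p_r,\hat{\boldsymbol{\eta}}_r,\hat{\boldsymbol{\xi}}_r,\hat\phi_r)$ solves the coupled system, and the assumed uniqueness forces it to equal $(\boldsymbol{u},p,\hat{\boldsymbol{\eta}},\hat{\boldsymbol{\xi}},\hat\phi)$.

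For the necessity direction ($\Rightarrow$), I would argue in the strong form. Assuming the two solution tuples coincide, integration by parts in (\ref{coupled}) against test functions localized near the interface recovers the original coupling conditions (\ref{interface1})--(\ref{interface4}), while the same procedure applied to (\ref{ns_robin1}) and (\ref{biot_robin1}) recovers the Robin relations (\ref{R1})--(\ref{R5}). Since all field variables and their stresses now agree, I can subtract each Robin relation from the matching coupling condition and solve algebraically for the corresponding datum: combining (\ref{R1}) with (\ref{interface3}) yields (\ref{c1}); combining (\ref{R2}) with (\ref{interface2}) yields (\ref{c2}); the normal-stress balance furnished by (\ref{interface3})--(\ref{interface4}) together with (\ref{R3}) yields the third relation of (\ref{compatibility}); the mass-conservation condition (\ref{interface1}) with (\ref{R4}) and Darcy's law yields the fourth; and (\ref{R5}) with the tangential balance gives the fifth. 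This produces exactly (\ref{compatibility}).

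The step I expect to be the main obstacle is the bookkeeping that makes both directions rigorous across the two frames: the fluid interface integrals live on the moving Eulerian interface $\Gamma(t)$, whereas the Biot integrals live on the reference interface $\hat\Gamma$, and one must track the opposite orientation of the two conormals ($\boldsymbol{n}_f=-\boldsymbol{n}_p$ on the interface) together with the ALE/Piola identification of the transferred traces such as $\hat{\boldsymbol{u}}_r$. Here the infinitesimal-deformation simplification $\hat{\boldsymbol{F}}=\boldsymbol{I}$, $\hat J=1$, which gives $\boldsymbol{\sigma}_p=\hat{\boldsymbol{\sigma}}_p\circ\boldsymbol{\Phi}_p^{-1}$, is what allows the two families of interface terms to be matched one to one, so that the cancellation of the $L_i$-penalties and the recovery of the physical coupling conditions are legitimate. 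The remaining care is to justify the strong-form extraction itself, namely that the solutions are regular enough for the integration by parts and the localization of test functions on $\Gamma(t)$ and $\hat\Gamma$ to be valid.
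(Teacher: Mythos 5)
Your proposal is correct, and the sufficiency half is essentially the paper's own argument: substitute (\ref{compatibility}) into the boundary integrals of (\ref{ns_robin1}) and (\ref{biot_robin1}), observe that the $L_i$-penalty terms cancel so that the sum of the two decoupled weak forms reproduces (\ref{coupled}), and invoke the assumed uniqueness. Where you genuinely diverge is the necessity direction. The paper never leaves the weak formulation: it sets $(\hat{\boldsymbol{\zeta}},\hat\psi)=0$ in (\ref{coupled}), compares with (\ref{ns_robin1}), and concludes that
\[
\langle R_{1}-L_{1}\boldsymbol{u}_r\cdot\boldsymbol{n}_f+\phi_r,\ \boldsymbol{v}\cdot\boldsymbol{n}_f\rangle_\Gamma+\langle R_{2}-\gamma P_f(\boldsymbol{\xi}_r),\ P_f(\boldsymbol{v})\rangle_\Gamma=0\quad\forall\,\boldsymbol{v}\in V_f(t),
\]
whence (\ref{c1})--(\ref{c2}) follow from the richness of the traces of $V_f(t)$, with $R_3,R_4,R_5$ handled symmetrically on the Biot side. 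You instead pass to the strong form: integrate by parts, localize test functions near the interface to recover (\ref{interface1})--(\ref{interface4}) and (\ref{R1})--(\ref{R5}) pointwise, and solve algebraically. That route is valid but costs you exactly the regularity hypothesis you flag at the end — the weak solutions are only assumed to lie in $H^1$-type spaces, so the stress traces and the localization are not justified without additional smoothness, whereas the paper's variational comparison needs nothing beyond trace-space density and sidesteps the $\Gamma(t)$-versus-$\hat\Gamma$ and $\boldsymbol{n}_f=-\boldsymbol{n}_p$ bookkeeping entirely, since the signs and frames are already encoded in the weak forms. What your version buys in exchange is an explicit strong-form dictionary between the Robin data and the physical coupling conditions (including the role of Darcy's law in $R_4$ and of the Piola simplification $\hat{\boldsymbol{F}}=\boldsymbol{I}$, $\hat J=1$), which is more illuminating mechanically but strictly stronger in its assumptions; to make it airtight at the paper's level of generality you should either add the regularity hypothesis explicitly or replace the localization step by the paper's direct subtraction of the two weak identities, which yields the same interface orthogonality relation without any integration by parts.
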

\begin{proof}
    For the necessity, we set $\left(\hat{\boldsymbol{\zeta}}, \hat{\psi}\right) = 0$ in the NSBiot system (\ref{coupled}) and deduce that $\left(\boldsymbol{u}, p, \boldsymbol{\hat{\eta}}, \boldsymbol{\hat{\xi}}, \hat{\phi}\right)$ solves (\ref{ns_robin1}) if:
    $$
    \langle R_{1} - L_{1}\boldsymbol{u}_r\cdot\boldsymbol{n}_f+\phi_r, \boldsymbol{v}\cdot\boldsymbol{n}_{f}\rangle_\Gamma + \langle R_2 - \gamma\boldsymbol{P}_f(\boldsymbol{\xi}_r), \boldsymbol{P}_f(\boldsymbol{v})\rangle_\Gamma = 0\quad \forall \boldsymbol{v}\in\boldsymbol{V}_{f}(t)
    $$
    which implies (\ref{c1}) and (\ref{c2}). The necessity for $R_3, R_4, R_5$ can be derived in a similar fashion.
    In terms of sufficiency ,by substituting the compatibility conditions (\ref{compatibility}) in (\ref{ns_robin1}) and (\ref{biot_robin1}), one yields that $\left(\boldsymbol{u}_r, p_r,\hat{\boldsymbol{\eta}}_r,\hat{\boldsymbol{\xi}}_r,\hat{\phi}_r \right)$ solves the coupled NSBiot system (\ref{coupled}). As the solution of the NSBiot system is assumed to be unique, we have $\left(\boldsymbol{u}, p,\hat{\boldsymbol{\eta}},\hat{\boldsymbol{\xi}},\hat\phi\right) = 
        \left(\boldsymbol{u}_r, p_r,\hat{\boldsymbol{\eta}}_r,\hat{\boldsymbol{\xi}}_r,\hat{\phi}_r \right)$.
\end{proof}
\begin{remark}
    For NSBiot problems in fixed domain, this equivalency holds without any assumption as existence and uniqueness of solution are guaranteed in \cite{cesmelioglu2017analysis}.
\end{remark}
\begin{remark}
    Following the choice in \cite{chen2011parallel}, we set $L_1=\frac{1}{L3}=L$ because convergence is guaranteed for Stokes-Darcy problems in this case, and $L_2 = 1$ for simplicity. From our benchmark test in Section \ref{real}, however, no convergence and rate of error issues were observed in linear Stokes-Biot problems.
\end{remark}
\begin{algorithm}[htbp]  
  \caption{ Fully parallelizable splitting scheme for  NSBiot system with moving interface}  
  \label{semi} 
  Given $\boldsymbol{u}^n, \boldsymbol{\eta}^n, \boldsymbol{\xi}^n$ and $\phi^n$, $\boldsymbol{\omega}^n$, we solve for $\boldsymbol{u}^{n+1}, \boldsymbol{\eta}^{n+1}, \boldsymbol{\xi}^{n+1}$ and $\phi^{n+1}$,$\boldsymbol{\omega}^{n+1}$, for $n=0, 1, 2, \cdots, N-1.$
  
\begin{algorithmic}[htbp]
\STATE \textbf{Set}
    \begin{align*}
        R_{1}^n=&L\boldsymbol{u}^n\cdot\boldsymbol{n}_f^n-\phi^n,
        &&R_{2}^n = \gamma P_f^n\left(\boldsymbol{\xi}^n\right),\\
        R_{3}^n=&\hat{\boldsymbol{\xi}}^n\cdot\hat{\boldsymbol{n}}_p,
        &&R_{4}^n=-\hat{\boldsymbol{u}}^n\cdot\hat{\boldsymbol{n}}_p+\hat\phi^n/L,\\
        R_{5}^n =& \gamma\hat{P}_p\left(\hat{\boldsymbol{u}}^n\right).
    \end{align*}

\STATE \textbf{Solve the following two sub-problems in parallel.}

\vspace{0.2in}
\noindent  \underline{\textbf{Fluid Subproblem:}}

$<1>$ Solve ALE mappings $\boldsymbol{\Phi}_f^{t^n}=\hat{\boldsymbol{x}}+\hat{\boldsymbol{\eta}}_f^n$ by (\ref{ALE})
     \begin{equation}
    \hat\Delta\hat{\boldsymbol{\eta}}^n_f = 0 \quad\text{in }\hat\Omega_f, \quad \hat{\boldsymbol{\eta}}^n_f = \hat{\boldsymbol{\eta}}^n \quad\text{on } \hat{\Gamma},\quad \hat{\boldsymbol{\eta}}_f^n = 0 \quad\text{on }\partial\hat\Omega_f\setminus\hat\Gamma.
     \label{ALE}
     \end{equation}
     Moreover, calculate $\boldsymbol{\omega}^n$ such that $\boldsymbol{\omega}^n=d_t\hat{\boldsymbol{\eta}}_f^n$ and update fluid mesh by setting $\Omega_f^{n}=\boldsymbol{\Phi}_f^{t^n}(\hat{\Omega}_f)$.
     
$<2>$ Solve (\ref{fluid}) for $\boldsymbol{u}^{n+1}$ and $p^{n+1}$.
     \begin{equation}
     \begin{aligned}
         &\rho_f\left(\frac{\boldsymbol{u}^{n+1}-\boldsymbol{u}^n}{\Delta t}, \boldsymbol{v}\right)_{f^n}
        +\rho_f\left((\boldsymbol{u}^n - \boldsymbol{\omega}^n)\cdot\nabla\boldsymbol{u}^{n+1}, \boldsymbol{v}\right)_{f^n}
        + 2\mu_f\left(\mathbb D(\boldsymbol{u}^{n+1}), \mathbb{D}(\boldsymbol{v})\right)_{f^n}
        \\&-(p^{n+1}, \nabla\cdot\boldsymbol{v})_{f^n}
        +(\nabla\cdot\boldsymbol{u}^{n+1}, q)_{f^n}\\
        &+\gamma\langle P_f^n\boldsymbol{u}^{n+1}, P_f^n\boldsymbol{v}\rangle_{\Gamma^n}
        +L\langle\boldsymbol{u}^{n+1}\cdot\boldsymbol{n}_f
        ,\boldsymbol{v}\cdot\boldsymbol{n}_f\rangle_{\Gamma^n}
        =
       \langle R_{2}^{n}, P_f^n\boldsymbol{v}\rangle_{\Gamma^n}
        \\&+\left(\boldsymbol{F}^{n+1}_f, \boldsymbol{v}\right)_{f^n}
        +\langle R_1^n, \boldsymbol{v}\cdot\boldsymbol{n}_f\rangle_{\Gamma^n}
        +\langle \boldsymbol{g}^{n+1}, \boldsymbol{v}\rangle_{\Sigma_f^{N}}, 
        ~\forall (\boldsymbol{v}, q)\in (V_f, Q_f)
     \end{aligned}
     \label{fluid}
    \end{equation}
    
$<3>$ Move mesh back.

\vspace{0.2in}
\noindent \underline{\textbf{Biot Subproblem:}}

Solve (\ref{A2}) for $\partial_t\hat{\boldsymbol{\eta}}^{n+1}=\hat{\boldsymbol{\xi}}^{n+1}$ and $\hat\phi^{n+1}$.
\begin{equation}
     \begin{aligned}
      \label{A2}
        &\rho_p\left(\frac{\hat{\boldsymbol{\xi}}^{n+1}-\hat{\boldsymbol{\xi}}^n}{\Delta t}, \boldsymbol{\hat\zeta}\right)_{p}
        + 2\mu_p\left(\hat{\mathbb D}(\hat{\boldsymbol{\eta}}^{n+1}), \hat{\mathbb{D}}(\hat{\boldsymbol{\zeta}})\right)_{p}
        + \lambda_p(\hat{\nabla}\cdot\hat{\boldsymbol{\eta}}^{n+1}, \hat{\nabla}\cdot\hat{\boldsymbol{\zeta}})_{p}\\
        &-\alpha(\hat{\phi}^{n+1}, \hat{\nabla}\cdot\hat{\boldsymbol{\zeta}})_{p}
        + C_0\left(\frac{\hat\phi^{n+1}-\hat\phi^n}{\Delta t}, \hat\psi\right)_{p}
        +\alpha(\hat{\nabla}\cdot\hat{\boldsymbol{\xi}}^{n+1}, \hat\psi)_{p}
        +(\mathbb K\hat{\nabla}\hat{\phi}^{n+1},\hat{\nabla}\hat{\psi})_{p}\\
        &+\gamma\langle \hat{P}_p\hat{\boldsymbol{\xi}}^{n+1}, \hat{P}_p\hat{\boldsymbol{\zeta}}\rangle_{\hat{\Gamma}}
        +\frac{1}{L}\langle\hat{\phi}^{n+1},\hat{\psi}\rangle_{\hat{\Gamma}}
        +\langle\hat{\phi}^{n+1}, \hat{\boldsymbol{\zeta}}\cdot\hat{\boldsymbol{n}}_p\rangle_{\hat{\Gamma}}
        -\langle\hat{\boldsymbol{\xi}}^{n+1}\cdot\hat{\boldsymbol{n}}_p,\hat\psi\rangle_{\hat{\Gamma}}\\
        &=\langle R_{5}^{n}, \hat{P}_p\hat{\boldsymbol{\zeta}}\rangle_{\hat{\Gamma}} 
        +\left(\hat{F}^{n+1}_d,\hat{\psi}\right)_{p}
        +\langle R_3^n, \hat{\boldsymbol{\zeta}}\cdot\hat{\boldsymbol{n}}_p\rangle_{\hat{\Gamma}}
        +\langle R_4^n, \hat\psi\rangle_{\hat{\Gamma}}, \\
        &\forall (\hat{\boldsymbol{\zeta}}, \hat{\psi})\in (\hat{V}_p, \hat{Q}_p)
    \end{aligned}
    \end{equation}
\STATE \textbf{Advancing to the next timestep.}
\end{algorithmic}
\end{algorithm}
\subsection{Proposed parallel decoupling algorithm}
In the remainder of this manuscript, we will omit all $r$ subscripts that indicate solutions of the decoupled method. To solve the fluids and Biot subproblems concurrently in time, we set right-hand side for those Robin-type boundaries explicit while keep left-hand side implicit. Besides, to avoid additional nonlinearity, interface conditions for fluid are evaluated on $\Gamma(t^n)$, resulting in the following discrete approximation:
\begin{subequations}
\begin{align}
&\boldsymbol{n}_f^n\cdot\left(\boldsymbol{\sigma}_f\boldsymbol{n}_f\right)^{n+1} + L\boldsymbol{u}^{n+1}\cdot\boldsymbol{n}_f^n = L\boldsymbol{u}^n\cdot\boldsymbol{n}_f^n - \phi^n\quad&&\text{on}\ \Gamma(t^n), \\
&P_f^n\left[\left(\boldsymbol{\sigma}_f\boldsymbol{n}_f\right)^{n+1}\right]+\gamma P_f^n\left(\boldsymbol{u}^{n+1}\right)=\gamma P_f^n\left(\boldsymbol{\xi}^n\right)\quad&&\text{on}\ \Gamma(t^n), \\
&\hat{\boldsymbol{n}}_p\cdot\hat{\boldsymbol{\sigma}}_p^{n+1}\hat{\boldsymbol{n}}_p +\hat{\phi}^{n+1} + \hat{\boldsymbol{\xi}}^{n+1}\cdot\hat{\boldsymbol{n}}_p=\hat{\boldsymbol{\xi}}^n\cdot\hat{\boldsymbol{n}}_p\quad&&\text{on}\ \hat{\Gamma},\\
    &\mathbb{K}\hat\nabla\hat{\phi}^{n+1}\cdot\boldsymbol{n}_p + \hat{\phi}^{n+1}/L - \hat{\boldsymbol{\xi}}^{n+1}\cdot\hat{\boldsymbol{n}}_p =\hat\phi^n/L - \hat{\boldsymbol{u}}^n\cdot\hat{\boldsymbol{n}}_p\quad&&\text{on } \hat{\Gamma}.\\
&\hat{P}_p\left(\hat{\boldsymbol{\sigma}}_p^{n+1}\hat{\boldsymbol{n}}_p\right) + \gamma\hat{P}_p\left(\hat{\boldsymbol{\xi}}^{n+1}\right) = \gamma\hat{P}_p\left(\hat{\boldsymbol{u}}^n\right)\quad&&\text{on}\ \hat\Gamma.
\end{align}
\end{subequations}
By applying back Euler scheme for time derivatives, the weak form of the proposed parallel semi-discretized scheme is outlined in Algorithm \ref{semi}.
\section{Stability analysis of the linear system}
\label{sec:main4}
In this section, our stability analysis is based on a linearized model to avoid complications arising from the original nonlinear problem. We assume that the flow is governed by the Stokes equations and that both the fluids and the Biot regions remain fixed. These assumptions are common in the analysis of domain decomposition methods for fluid-structure interaction (FSI) problems \cite{MartinaOyekole, seboldt2021numerical}. We will therefore skip all hat’s that indicate reference variables. The time-splitting scheme in the weak formulation is presented in Algorithm \ref{fixedAL}.

For simplicity of notation, let $\Vert\cdot\Vert_f$ denote $\Vert\cdot\Vert_{L^2(\Omega_f)}$, $\Vert\cdot\Vert_p$ denote $\Vert\cdot\Vert_{L^2(\Omega_p)}$, $\Vert\cdot\Vert_\Gamma$ denote $\Vert\cdot\Vert_{L^2(\Gamma)}$ and $\Vert\cdot\Vert_N$ denote $\Vert\cdot\Vert_{L^2(\Sigma_f^N)}$. In the following, we introduce the elastic energy of the solid defined as:
\begin{equation}
    \Vert\boldsymbol{\eta}^n\Vert^2_S=2\mu_p\Vert\mathbb D(\boldsymbol{\eta}^n)\Vert^2_p+\lambda_p\Vert\nabla\cdot\boldsymbol{\eta}^n\Vert^2_p,
\label{energynorm}
\end{equation}
and the sum of kinetic and elastic energy of the solid, along with the kinetic energy of the fluid, given by:
$$
    \mathcal E^n=\frac{\rho_p}{2}\Vert\boldsymbol{\xi}^n\Vert^2_p+\frac{1}{2}\Vert\boldsymbol{\eta}^n\Vert_S^2+\frac{C_0}{2}\Vert\phi^n\Vert^2_p+\frac{\rho_f}{2}\Vert\boldsymbol{u}^n\Vert^2_f.
$$
Let $\mathcal D^n$ denote the fluid dissipation term:
$$
\mathcal D^n=\mu_f\Delta t\Vert\mathbb D(\boldsymbol{u}^n)\Vert^2_f + \frac{\Delta t}{2}K_m\Vert\nabla\phi^n\Vert^2_p,
$$
$\mathcal I^n$ denote the energy on the interface $\Gamma$:
\begin{align*}
    \mathcal I^n=\frac{\Delta t}{2}\left(
    L\Vert\boldsymbol u^{n}\cdot\boldsymbol{n}_f\Vert^2_\Gamma + \frac{\Vert\phi^n\Vert^2_\Gamma}{L} +\Vert\boldsymbol{\xi}^n\cdot\boldsymbol{n}_p\Vert_\Gamma^2+\gamma\sum_{j=1}^{d-1}\Vert\boldsymbol u^{n}\cdot\boldsymbol{\tau}_{f,j}\Vert^2_\Gamma+\Vert\boldsymbol \xi^{n}\cdot\boldsymbol{\tau}_{p,j}\Vert^2_\Gamma\right)
    \end{align*}
$\mathcal{N}^n$ represent the remaining terms due to numerical dissipation:
\begin{align*}
    &\mathcal N^n=\frac{\rho_p}{2}\sum_{i=1}^n\Vert\boldsymbol{\xi}^i-\boldsymbol{\xi}^{i-1}\Vert^2_p + \mu_p\sum_{i=1}^n\Vert\mathbb D(\boldsymbol{\eta}^i-\boldsymbol{\eta}^{i-1})\Vert^2_p
    +\frac{\lambda_p}{2}\sum_{i=1}^n\Vert\nabla\cdot(\boldsymbol{\eta}^i-\boldsymbol{\eta}^{i-1})\Vert^2_p\\
    &+\frac{\rho_f}{2}\sum_{i=1}^n\Vert\boldsymbol{u}^i-\boldsymbol{u}^{i-1}\Vert^2_f
    +\frac{C_0}{2}\sum_{i=1}^n\Vert\phi^i-\phi^{i-1}\Vert^2_p+\frac{\Delta t}{2}\gamma\sum_{j=1}^{d-1}\sum_{i=1}^n\Vert(\boldsymbol{u}^i-\boldsymbol{\xi}^{i-1})\cdot\boldsymbol{\tau}_{f, j}\Vert^2_\Gamma\\
    &+\frac{\Delta t}{2}\gamma\sum_{j=1}^{d-1}\sum_{i=1}^n\Vert(\boldsymbol{\xi}^i-\boldsymbol{u}^{i-1})\cdot\boldsymbol{\tau}_{p, j}\Vert^2_\Gamma+\frac{\Delta t}{2}\sum_{i=1}^{n}\Vert\left(\boldsymbol{\xi}^{i}-\boldsymbol{\xi}^{i-1}\right)\cdot\boldsymbol{n}_p\Vert^2_\Gamma,
\end{align*}
and $\mathcal F^n$ be the energy of the forcing terms:
$$
    \mathcal{F}^n = \Delta t\frac{C_{p1}^2C_K^2}{2\mu_f}\Vert\boldsymbol{F}_f^{n+1}\Vert^2_f + \Delta t\frac{C_{tr}^2C_K^2C_{p1}}{2\mu_f}\Vert\boldsymbol{g}^{n+1}\Vert^2_N + \Delta t\frac{C_{p2}^2}{2K_m}\Vert F_d^{n+1}\Vert^2_p,
$$
where $C_{p1}, C_{p2}$ are constants related to Poincaré-Friedrichs inequality, while $C_K, C_{tr}$ are constants associated with Korn's inequality and the trace inequality, respectively.
In addition, the polarization identity will be applied
\begin{equation}
\label{polar}
    2(a-b)a = a^2 - b^2 + (a-b)^2.
\end{equation}
Let $T=n\Delta t$. The energy estimate for the proposed decoupled scheme is given in the following theorem.
\begin{theorem}
Let $(\boldsymbol{u}^n, p^n, \boldsymbol{\xi}^n, \boldsymbol{\eta}^n, \phi^n)$ be the solution of the decoupled scheme described by Algorithm \ref{fixedAL}. Then, this scheme is unconditionally stable and a priori energy estimate holds as follows:
\begin{equation}
\mathcal E^n+\mathcal D^n + \mathcal I^n + \mathcal N^n \le \left(\mathcal F^n + \mathcal E^0 + \mathcal D^0 + \mathcal I^0\right)e^{CT},
                    \label{priori}
\end{equation}
where $C$ is a positive constant deriving from the Gronwall's inequality.
\end{theorem}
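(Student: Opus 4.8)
The plan is to derive a one-step discrete energy balance by testing each subproblem with its own unknowns, and then to show that the explicitly lagged Robin data on the interface generate only controllable (non-negative or Gronwall-type) contributions. Concretely, I would test the fluid momentum equation with $\boldsymbol v=\Delta t\,\boldsymbol u^{n+1}$ and $q=\Delta t\,p^{n+1}$, and the Biot momentum and flow equations with $\boldsymbol\zeta=\Delta t\,\boldsymbol\xi^{n+1}=\boldsymbol\eta^{n+1}-\boldsymbol\eta^{n}$ and $\psi=\Delta t\,\phi^{n+1}$. Adding the two tested identities, the pressure terms $-(p^{n+1},\nabla\cdot\boldsymbol u^{n+1})$ and $(\nabla\cdot\boldsymbol u^{n+1},p^{n+1})$ cancel, the Biot--Willis terms $-\alpha(\phi^{n+1},\nabla\cdot\boldsymbol\xi^{n+1})$ and $\alpha(\nabla\cdot\boldsymbol\xi^{n+1},\phi^{n+1})$ cancel, and the two weakly imposed normal coupling terms $\langle\phi^{n+1},\boldsymbol\xi^{n+1}\cdot\boldsymbol n_p\rangle_\Gamma$ and $-\langle\boldsymbol\xi^{n+1}\cdot\boldsymbol n_p,\phi^{n+1}\rangle_\Gamma$ cancel as well. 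Applying the polarization identity \eqref{polar} to each backward-Euler difference then yields exactly $\mathcal E^{n+1}-\mathcal E^{n}$ together with the quadratic increments $\tfrac{\rho_f}{2}\Vert\boldsymbol u^{n+1}-\boldsymbol u^{n}\Vert_f^2$, $\mu_p\Vert\mathbb D(\boldsymbol\eta^{n+1}-\boldsymbol\eta^{n})\Vert_p^2$, and the rest, which accumulate into $\mathcal N^{n+1}$; the implicit viscous and Darcy terms produce $2\mu_f\Delta t\Vert\mathbb D(\boldsymbol u^{n+1})\Vert_f^2$ and, via the uniform ellipticity \eqref{uniformly}, $K_m\Delta t\Vert\nabla\phi^{n+1}\Vert_p^2$, half of which I retain as $\mathcal D^{n+1}$.

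The core of the argument is the five interface Robin terms, whose right-hand data $R_1^n,\dots,R_5^n$ sit at the previous time level. I would first observe that the two tangential (BJS) pairs telescope cleanly: writing, for instance, $\gamma\langle P_f\boldsymbol u^{n+1}-P_f\boldsymbol\xi^{n},P_f\boldsymbol u^{n+1}\rangle_\Gamma$ and applying \eqref{polar} gives $\tfrac{\gamma}{2}(\Vert P_f\boldsymbol u^{n+1}\Vert_\Gamma^2-\Vert P_f\boldsymbol\xi^{n}\Vert_\Gamma^2)$ plus the non-negative jump $\tfrac{\gamma}{2}\Vert P_f(\boldsymbol u^{n+1}-\boldsymbol\xi^{n})\Vert_\Gamma^2$. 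The very mismatch between the implicit $\boldsymbol u^{n+1}$ and the lagged $\boldsymbol\xi^{n}$ (and symmetrically $\boldsymbol\xi^{n+1}$ against $\boldsymbol u^{n}$ on the Biot side) is precisely what lets the tangential energies in $\mathcal I$ telescope while the jumps populate $\mathcal N$. The solid normal term $\langle(\boldsymbol\xi^{n+1}-\boldsymbol\xi^{n})\cdot\boldsymbol n_p,\boldsymbol\xi^{n+1}\cdot\boldsymbol n_p\rangle_\Gamma$ behaves identically, producing the $\Vert\boldsymbol\xi^{n}\cdot\boldsymbol n_p\Vert_\Gamma^2$ contribution to $\mathcal I$ and its jump to $\mathcal N$.

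The genuinely delicate part, which I expect to be the main obstacle, is the coupled normal block linking the fluid normal velocity to the pore pressure: $L\langle\boldsymbol u^{n+1}\cdot\boldsymbol n_f,\boldsymbol u^{n+1}\cdot\boldsymbol n_f\rangle_\Gamma$ with the lagged datum $L\boldsymbol u^{n}\cdot\boldsymbol n_f-\phi^{n}$ from $R_1^n$, and $\tfrac1L\langle\phi^{n+1},\phi^{n+1}\rangle_\Gamma$ with $\phi^{n}/L-\boldsymbol u^{n}\cdot\boldsymbol n_p$ from $R_4^n$. Unlike the tangential and solid-normal blocks, these do not telescope on their own: after \eqref{polar} they leave the mixed cross terms $\langle\phi^{n},(\boldsymbol u^{n+1}-\boldsymbol u^{n})\cdot\boldsymbol n_f\rangle_\Gamma$ and $-\langle\boldsymbol u^{n}\cdot\boldsymbol n_f,\phi^{n+1}-\phi^{n}\rangle_\Gamma$ (using $\boldsymbol n_p=-\boldsymbol n_f$). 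I would absorb these by Young's inequality against the extra non-negative normal jumps $\tfrac{L}{2}\Vert(\boldsymbol u^{n+1}-\boldsymbol u^{n})\cdot\boldsymbol n_f\Vert_\Gamma^2$ and $\tfrac1{2L}\Vert(\phi^{n+1}-\phi^{n})\Vert_\Gamma^2$ produced by the same polarization step, at the cost of lower-order boundary residuals $\tfrac{L}{2}\Vert\boldsymbol u^{n}\cdot\boldsymbol n_f\Vert_\Gamma^2+\tfrac1{2L}\Vert\phi^{n}\Vert_\Gamma^2$. These residuals live on the interface at the old time level and are not part of $\mathcal N$; I would control them with a trace inequality followed by Korn's inequality, splitting each into a gradient part absorbed by the implicit dissipation $\mathcal D$ and an $L^2$ part bounded by $C\Delta t\,\mathcal E^{n}$.

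Finally, I would bound the three forcing functionals: Cauchy--Schwarz together with the Poincaré--Friedrichs, Korn, and trace inequalities turns $\Delta t(\boldsymbol F_f^{n+1},\boldsymbol u^{n+1})_f$, $\Delta t\langle\boldsymbol g^{n+1},\boldsymbol u^{n+1}\rangle_{\Sigma_f^N}$ and $\Delta t(F_d^{n+1},\phi^{n+1})_p$ into the data collected in $\mathcal F^{n}$ plus fractions of $2\mu_f\Delta t\Vert\mathbb D(\boldsymbol u^{n+1})\Vert_f^2$ and $K_m\Delta t\Vert\nabla\phi^{n+1}\Vert_p^2$, which the retained dissipation absorbs. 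Summing the one-step inequality over $n$ telescopes $\mathcal E$ and $\mathcal I$ and accumulates $\mathcal D$ and $\mathcal N$; the only terms obstructing a clean bound are the $C\Delta t\,\mathcal E^{n}$ residuals from the normal block, so the discrete Gronwall lemma applied to $\mathcal E^{n}+\mathcal I^{n}$ produces the factor $e^{CT}$ and the stated estimate \eqref{priori}. Throughout, no inequality ties $\Delta t$ to $C_0$, $\mu_p$, $\lambda_p$ or $\rho_p$, which is exactly what delivers the claimed unconditional stability.
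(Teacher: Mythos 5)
Your proposal is correct and follows essentially the same route as the paper's own proof: the identical test functions $\Delta t(\boldsymbol{u}^{n+1}, p^{n+1}, \boldsymbol{\xi}^{n+1}, \phi^{n+1})$, the same cancellations and polarization-identity telescoping into $\mathcal E$, $\mathcal I$, $\mathcal N$, and in particular the same treatment of the delicate lagged normal block --- Young's inequality absorbing the cross terms into the polarization jumps, with the leftover old-level interface residuals $\tfrac{L}{2}\Vert\boldsymbol{u}^n\cdot\boldsymbol{n}_f\Vert_\Gamma^2+\tfrac{1}{2L}\Vert\phi^n\Vert_\Gamma^2$ split by trace and Korn inequalities between the retained dissipation $\mathcal D$ and an $L^2$ term handled by discrete Gronwall. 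The paper executes exactly this plan, so your outline matches it step for step with no gaps.
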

\begin{proof}
By taking $(\boldsymbol{v}, q, \boldsymbol{\zeta}, \psi)=\Delta t(\boldsymbol{u}^{n+1}, p^{n+1}, \boldsymbol{\xi}^{n+1}, \phi^{n+1})$ in (\ref{Stable1}) and (\ref{Stable2}), adding the equations together, integrating by parts and applying (\ref{polar}), one yields:
\begin{align}
        &\frac{\rho_f}{2}\left(\Vert\boldsymbol{u}^{n+1}\Vert^2_f-\Vert\boldsymbol{u}^n\Vert^2_f+\Vert\boldsymbol{u}^{n+1}-\boldsymbol{u}^n\Vert^2_f\right) + 2\mu_f\Delta t\Vert\mathbb D(u^{n+1})\Vert^2_f + L\Delta t\Vert u^{n+1}\cdot\boldsymbol{n}_f\Vert_\Gamma^2\\
        &+\frac{\rho_p}{2}\left(\Vert\boldsymbol{\xi}^{n+1}\Vert^2_p-\Vert\boldsymbol{\xi}^n\Vert^2_p+\Vert\boldsymbol{\xi}^{n+1}-\boldsymbol{\xi}^n\Vert^2_p\right) +
        \mu_p\left(\Vert\mathbb D(\boldsymbol{\eta}^{n+1})\Vert^2_p
        -\Vert\mathbb D(\boldsymbol{\eta}^n)\Vert^2_p\right. \nonumber\\
        &\left.+\Vert\mathbb D(\boldsymbol{\eta}^{n+1}-\boldsymbol{\eta}^n)\Vert^2_p\right)
        +\frac{\lambda_p}{2}\left(\Vert\nabla\cdot\boldsymbol{\eta}^{n+1}\Vert^2_p
         -\Vert\nabla\cdot\boldsymbol{\eta}^n\Vert^2_p+\Vert\nabla\cdot(\boldsymbol{\eta}^{n+1}-\boldsymbol{\eta}^n)\Vert^2_p\right) \nonumber\\
         &+\Delta t\Vert\boldsymbol{\xi}^{n+1}\cdot\boldsymbol{n}_p\Vert_\Gamma^2+\frac{C_0}{2}\left(\Vert\phi^{n+1}\Vert^2_p-\Vert\phi^n\Vert^2_p+\Vert\phi^{n+1}-\phi^n\Vert^2_p\right)
        +\Delta t\Vert\mathbb K^{\frac{1}{2}}\nabla\phi^{n+1}\Vert^2_p\nonumber\\
        &+\frac{\Delta t}{L}\Vert\phi^{n+1}\Vert^2_\Gamma
        =\Delta t(\boldsymbol{F}^{n+1}_f, \boldsymbol{u}^{n+1})_f
        +\Delta t(F^{n+1}_d, \phi^{n+1})_p
        +\Delta t\langle\boldsymbol{g}^{n+1},\boldsymbol{u}^{n+1}\rangle_{\Sigma_f^N}\nonumber\\
        &+ \Delta t\langle R_1^n, \boldsymbol{u}^{n+1}\cdot\boldsymbol{n}_f\rangle_\Gamma
        + \Delta t\langle R_3^n, \boldsymbol{\xi}^{n+1}\cdot\boldsymbol{n}_p\rangle_\Gamma
        + \Delta t\langle R_4^n, \phi^{n+1}\rangle_\Gamma\nonumber\\
        &+\Delta t\gamma\langle P_p(\boldsymbol{u}^n-\boldsymbol{\xi}^{n+1}), P_p\boldsymbol{\xi}^{n+1}\rangle_\Gamma - \Delta t\gamma\langle P_f(\boldsymbol{u}^{n+1}-\boldsymbol{\xi}^n), P_f\boldsymbol{u}^{n+1}\rangle_\Gamma\nonumber.
\end{align}
For the left-hand side, we replace the last term into $\Delta tK_m\Vert\nabla\phi^{n+1}\Vert^2$ in the following derivation because of (\ref{uniformly}).
We bound the right-hand side as follows. Applying the polarization identity (\ref{polar}), one yields the term containing $R_1^n$ as
\begin{align*}
    &\quad\ \Delta t\langle R_1^n, \boldsymbol{u}^{n+1}\cdot\boldsymbol{n}_f\rangle_\Gamma\\
    &= \Delta t\langle L\boldsymbol{u}^n\cdot\boldsymbol{n}_f-\phi^n,\boldsymbol{u}^{n+1}\cdot\boldsymbol{n}_f\rangle_\Gamma\\
    &=\Delta tL\langle \boldsymbol{u}^n\cdot\boldsymbol{n}_f,\boldsymbol{u}^n\cdot\boldsymbol{n}_f+(\boldsymbol{u}^{n+1}-\boldsymbol{u}^n)\cdot\boldsymbol{n}_f\rangle_\Gamma - \Delta t\langle\phi^n, \boldsymbol{u}^{n+1}\cdot\boldsymbol{n}_f\rangle_\Gamma\\
    &=\frac{\Delta tL}{2}\left(\Vert\boldsymbol{u}^n\cdot\boldsymbol{n}_f\Vert_\Gamma^2+\Vert\boldsymbol{u}^{n+1}\cdot\boldsymbol{n}_f\Vert_\Gamma^2-\Vert(\boldsymbol{u}^{n+1}-\boldsymbol{u}^n)\cdot\boldsymbol{n}_f\Vert^2_\Gamma\right)-\Delta t\langle\phi^n, \boldsymbol{u}^{n+1}\cdot\boldsymbol{n}_f\rangle_\Gamma.
\end{align*}
Similarly, terms of $R_3^n$ and $R_4^n$ can be rewritten as
\begin{align*}
   &\quad\ \Delta t\langle R_3^n, \boldsymbol{\xi}^{n+1}\cdot\boldsymbol{n}_p\rangle_\Gamma=\frac{\Delta t}{2}\left(\Vert\boldsymbol{\xi}^n\cdot\boldsymbol{n}_f\Vert_\Gamma^2+\Vert\boldsymbol{\xi}^{n+1}\cdot\boldsymbol{n}_f\Vert_\Gamma^2-\Vert(\boldsymbol{\xi}^{n+1}-\boldsymbol{\xi}^n)\cdot\boldsymbol{n}_f\Vert^2_\Gamma\right) 
\end{align*}
and
\begin{align*}
    \Delta t\langle R_4^n, \phi^{n+1}\cdot\boldsymbol{n}_f\rangle_\Gamma 
    =\frac{\Delta t}{2L}\left(\Vert\phi^n\Vert_\Gamma^2+\Vert\phi^{n+1}\Vert_\Gamma^2-\Vert\phi^{n+1}-\phi^n\Vert^2_\Gamma\right)+\Delta t\langle\boldsymbol{u}^n\cdot\boldsymbol{n}_f, \phi^{n+1}\rangle_\Gamma.
\end{align*}
To deal with the residual term $\Delta t\langle\boldsymbol{u}^n\cdot\boldsymbol{n}_f, \phi^{n+1}\rangle_\Gamma - \Delta t\langle\phi^n, \boldsymbol{u}^{n+1}\cdot\boldsymbol{n}_f\rangle_\Gamma$ involving asynchronous values, we first use the Cauchy-Schwarz and Young's inequality as follows:
\begin{align*}
    &\Delta t\langle\boldsymbol{u}^n\cdot\boldsymbol{n}_f, \phi^{n+1}\rangle_\Gamma - \Delta t\langle\phi^n, \boldsymbol{u}^{n+1}\cdot\boldsymbol{n}_f\rangle_\Gamma\\
    =&\Delta t\langle\boldsymbol{u}^n\cdot\boldsymbol{n}_f, \phi^{n+1}-\phi^n\rangle_\Gamma + \Delta t\langle\phi^n, (\boldsymbol{u}^n-\boldsymbol{u}^{n+1})\cdot\boldsymbol{n}_f\rangle_\Gamma\\
    \le&\frac{\Delta t}{2L}\Vert\phi^{n+1}-\phi^n\Vert^2_\Gamma+\frac{\Delta tL}{2}\Vert(\boldsymbol{u}^{n+1}-\boldsymbol{u}^n)\cdot\boldsymbol{n}_f\Vert^2_\Gamma+\frac{\Delta tL}{2}\Vert\boldsymbol{u}^n\cdot\boldsymbol{n}_f\Vert^2_\Gamma+\frac{\Delta t}{2L}\Vert\phi^n\Vert_\Gamma^2.
\end{align*}
Note that the first two terms above will both be canceled out and what remain are the last two terms. To bound them, applying the trace, Korn's and Young's inequality, we have
\begin{align*}
    &\frac{\Delta tL}{2}\Vert\boldsymbol{u}^n\cdot\boldsymbol{n}_f\Vert^2_\Gamma+\frac{\Delta t}{2L}\Vert\phi^n\Vert_\Gamma^2\le\frac{\Delta t C_{tr1}^2}{2L}\Vert\nabla\phi^n\Vert_p\Vert\phi^n\Vert_p+\frac{\Delta t}{2}LC_{tr2}^2C_K\Vert\mathbb D(\boldsymbol{u}^n)\Vert_f\Vert\boldsymbol{u}^n\Vert_f\\
&\le\frac{\Delta tK_m}{2}\Vert\nabla\phi^n\Vert^2_p+\frac{\Delta tC_{tr1}^4}{8K_mL^2}\Vert\phi^n\Vert^2_p+\mu_f\Delta t\Vert\mathbb D(\boldsymbol{u}^n)\Vert^2_f+\frac{C_{tr2}^4C_K^2L^2}{16\mu_f}\Delta t\Vert\boldsymbol{u}^n\Vert^2_f.
\end{align*}
We rewrite the tangential components as follows
\begin{align*}
&\Delta t\gamma\langle P_p(\boldsymbol{u}^n-\boldsymbol{\xi}^{n+1}), P_p\boldsymbol{\xi}^{n+1}\rangle - \Delta t\gamma\langle P_f(\boldsymbol{u}^{n+1}-\boldsymbol{\xi}^n), P_f\boldsymbol{u}^{n+1}\rangle\\
=&\frac{\Delta t\gamma}{2}\sum_{j=1}^{d-1}\left(\Vert\boldsymbol{u}^n\cdot\boldsymbol{\tau}_{p, j}\Vert^2_\Gamma-\Vert\boldsymbol{\xi}^{n+1}\cdot\boldsymbol{\tau}_{p, j}\Vert^2_\Gamma-\Vert(\boldsymbol{u}^n-\boldsymbol{\xi}^{n+1})\cdot\boldsymbol{\tau}_{p, j}\Vert^2_\Gamma\right)\\
-&\frac{\Delta t\gamma}{2}\sum_{j=1}^{d-1}\left(\Vert\boldsymbol{u}^{n+1}\cdot\boldsymbol{\tau}_{f, j}\Vert^2_\Gamma-\Vert\boldsymbol{\xi}^n\cdot\boldsymbol{\tau}_{f, j}\Vert^2_\Gamma+\Vert(\boldsymbol{u}^{n+1}-\boldsymbol{\xi}^n)\cdot\boldsymbol{\tau}_{f, j}\Vert^2_\Gamma\right).
\end{align*}
By using the the Cauchy-Schwarz, Poincaré-Friedrichs, trace, Korn's and Young's inequality, we bound the forcing terms as:
\begin{align*}
    &\Delta t(\boldsymbol{F}_f^{n+1}, \boldsymbol{u}^{n+1})+\Delta t\langle\boldsymbol{g}^{n+1}, \boldsymbol{u}^{n+1}\rangle_N+\Delta t(F^{n+1}_d, \phi^{n+1})\\
    \le&\Delta t\frac{C_{p1}^2C_K^2}{2\mu_f}\Vert \boldsymbol{F}_f^{n+1}\Vert^2+\frac{\Delta t}{2}\mu_f\Vert\mathbb D(\boldsymbol{u}^{n+1})\Vert^2+\Delta t\frac{C_{tr3}^2C_K^2C_{p1}}{2\mu_f}\Vert \boldsymbol{g}^{n+1}\Vert^2_N\\
    &+\frac{\Delta t}{2}\mu_f\Vert\mathbb D(\boldsymbol{u}^{n+1})\Vert^2
    +\Delta t\frac{C_{p2}^2}{2K_m}\Vert F_d^{n+1}\Vert^2+\Delta t\frac{K_m}{2}\Vert\nabla\phi^{n+1}\Vert^2.
\end{align*}
Finally, combining all estimations above and summing from $0$ to $n-1$, we get:
$$
\mathcal E^n+\mathcal D^n + \mathcal I^n + \mathcal N^n \le \mathcal F^n + \mathcal E^0 + \mathcal D^0 + \mathcal I^0 + \frac{C_{tr1}^4}{8K_mL^2}\Delta t\sum_{i=0}^{n-1}\Vert\phi^i\Vert^2 + \frac{C_{tr2}^4C_K^2L^2}{16\mu_f}\Delta t\sum_{i=0}^{n-1}\Vert \boldsymbol{u}^i\Vert^2.
$$
By using the Gronwall's inequality, the last two terms above can be bounded and one yields the priori energy estimate (\ref{priori}).
\end{proof}

\begin{algorithm}[H]
  \caption{ Fully parallelizable splitting scheme for linearized Stokes-Biot problem}  
  \label{fixedAL}
  Given $\boldsymbol{u}^n, \boldsymbol{\eta}^n, \boldsymbol{\xi}^n$ and $\phi^n$, we solve for $\boldsymbol{u}^{n+1}, \boldsymbol{\eta}^{n+1}, \boldsymbol{\xi}^{n+1}$ and $\phi^{n+1}$, for $n=0, 1, 2, \cdots, N-1.$
  
\begin{algorithmic}[H]
\STATE \textbf{Set}
    \begin{align*}
        R_{1}^n=&L\boldsymbol{u}^n\cdot\boldsymbol{n}_f^n-\phi^n,
        && R_{2}^n =\gamma P_f^n\left(\boldsymbol{\xi}^n\right),\\
        R_{3}^n=&\boldsymbol{\xi}^n\cdot\boldsymbol{n}_p,
        &&R_{4}^n=-\boldsymbol{u}^n\cdot\boldsymbol{n}_p+\phi^n/L,\\
        R_{5}^n =& \gamma P_p\left(\boldsymbol{u}^n\right).
    \end{align*}
    
\STATE \textbf{Solve the following two sub-problems in parallel.}

\vspace{0.2in}
\noindent  \underline{\textbf{Fluid Subproblem:}}
Solve (\ref{Stable1}) for $\boldsymbol{u}^{n+1}, p^{n+1}$.
 \boxedeq{
     \begin{aligned}
        &\rho_f\left(\frac{\boldsymbol{u}^{n+1}-\boldsymbol{u}^n}{\Delta t}, \boldsymbol{v}\right)_f 
        + 2\mu_f\left(\mathbb D(\boldsymbol{u}^{n+1}), \mathbb{D}(\boldsymbol{v})\right)_f
        -(p^{n+1}, \nabla\cdot\boldsymbol{v})_f
        +(\nabla\cdot\boldsymbol{u}^{n+1}, q)_f\\
        &+\gamma\langle P_f\boldsymbol{u}^{n+1}, P_f\boldsymbol{v}\rangle_{\Gamma}
        +L\langle\boldsymbol{u}^{n+1}\cdot\boldsymbol{n}_f, \boldsymbol{v}\cdot\boldsymbol{n}_f\rangle_{\Gamma} 
        = \langle R_{2}^{n}, P_f\boldsymbol{v}\rangle_{\Gamma}
        +\left(\boldsymbol{F}^{n+1}_f, \boldsymbol{v}\right)_f\\
        &+\langle R_1^n, \boldsymbol{v}\cdot\boldsymbol{n}_f\rangle_{\Gamma}
        +\langle \boldsymbol{g}^{n+1}, \boldsymbol{v}\rangle_{\Sigma_f^N}, 
        ~\forall (\boldsymbol{v}, q)\in (V_f, Q_f)
    \end{aligned}
    \label{Stable1}
}

\noindent  \underline{\textbf{Structure Subproblem:}}
Solve (\ref{Stable2}) for $\partial_t\boldsymbol{\eta}^{n+1}=\boldsymbol{\xi}^{n+1}$ and $\phi^{n+1}$.
 \boxedeq{
     \begin{aligned}
      \label{Stable2}
        &\rho_p\left(\frac{\boldsymbol{\xi}^{n+1}-\boldsymbol{\xi}^n}{\Delta t}, \boldsymbol{\zeta}\right)_p 
        + 2\mu_p\left(\mathbb D(\boldsymbol{\eta}^{n+1}), \mathbb{D}(\boldsymbol{\zeta})\right)_p + \lambda_p(\nabla\cdot\boldsymbol{\eta}^{n+1}, \nabla\cdot\boldsymbol{\zeta})_p\\
        &-\alpha(\phi^{n+1}, \nabla\cdot\boldsymbol{\zeta})_p
        +C_0\left(\frac{\phi^{n+1}-\phi^n}{\Delta t}, \psi\right)_p+\alpha(\nabla\cdot\boldsymbol{\xi}^{n+1}, \psi)_p+(\mathbb K\nabla\phi^{n+1},\nabla\psi)_p\\
        &+\gamma\langle P_p\boldsymbol{\xi}^{n+1}, P_p\boldsymbol{\zeta}\rangle_{\Gamma}
        +\langle\boldsymbol{\xi}^{n+1}\cdot\boldsymbol{n}_p, \boldsymbol{\zeta}\cdot\boldsymbol{n}_p\rangle_{\Gamma}
        +\langle\phi^{n+1}, \boldsymbol{\zeta}\cdot\boldsymbol{n}_p\rangle_{\Gamma}
        +\frac{1}{L}\langle\phi^{n+1},\psi\rangle_{\Gamma}\\
        &-\langle\boldsymbol{\xi}^{n+1}\cdot\boldsymbol{n}_p,\psi\rangle_{\Gamma}
        =\langle  R_{5}^{n}, P_p\boldsymbol{\zeta}\rangle_{\Gamma}
        +\left(F^{n+1}_d,\psi\right)_p
        +\langle R_2^n, \boldsymbol{\zeta}\cdot\boldsymbol{n}_p\rangle_{\Gamma}
        +\langle R_3^n, \psi\rangle_{\Gamma}, \\
        &\forall (\boldsymbol{\zeta}, \psi)\in (V_p, Q_p)
    \end{aligned}
    }
    
\vspace{0.1in} 
\STATE \textbf{Advancing to the next timestep.}
\end{algorithmic}
\end{algorithm}

\section{Numerical examples}
\label{sec:main5}
In this section, we present a few numerical examples to demonstrate the stability and accuracy of our proposed method. We use the finite element method for the spatial discretization, employing Taylor-Hood $P2-P1$ elements for fluid variables. In terms of Biot variables, some researchers found that the standard finite element method is unstable for two-field (displacement-pressure) Biot formulations, particularly when dealing with very small $\mathbb K$ and $C_0$, which may lead to pressure oscillations, or large $\lambda_p$, being associated with locking of displacement\cite{murad1992improved, boon2023mixed, rodrigo2018new}. In this regard, we use $P1$ element for Darcy pressure and $P2$ element for displacement to construct the mixed stable finite element pairs. The Code for all numerical examples reported below is written in FEniCS \cite{AlnaesEtal2014, AlnaesEtal2015}, while the parallelization for solving two subproblems independently is implemented through \textit{multiprocessing} library in Python.

\begin{remark} 
We choose $\mathbb{K} = \mathcal{K}\boldsymbol{I}$ in the following numerical examples for simplicity, where $\mathcal{K}$ is a positive constant. In real physical problems, the value of $\mathcal{K}$ is usually much less than 1. For the choice of parameter $L$ in the Robin conditions, we suggest taking $L = \frac{1}{\mathcal{K}}$ from a numerical perspective. To illustrate how this choice works, we rewrite the Robin-type condition for Darcy pressure $\phi$ as:
$$
R_3=\mathcal{K}\nabla\phi^{n+1}\cdot\boldsymbol{n}_p+\frac{1}{L}\phi^{n+1}=\boldsymbol{\xi}^{n+1}\cdot\boldsymbol{n}_p-\boldsymbol{u}^{n}\cdot\boldsymbol{n}_p+\frac{1}{L}\phi^{n}.
$$
If $\mathcal{K}$ is too small compared to $\frac{1}{L}$, then the term $\frac{1}{L}\phi^{n+1} = \frac{1}{L}\phi^n$ becomes predominant, while the interface condition 
$\mathcal{K}\nabla\phi\cdot\boldsymbol{n}_p \approx \boldsymbol{\xi}\cdot\boldsymbol{n}_p - \boldsymbol{u}\cdot\boldsymbol{n}_p$ has little effect. Therefore, to keep all variables at a comparable level, we take $L = \frac{1}{\mathcal{K}}$ in the following computations.
\end{remark}

\subsection{Benchmark problem}
\label{real}
In this example, we considered two benchmark problems with manufactured solutions \cite{seboldt2021numerical} to examine the rate of convergence and the temporal accuracy of the proposed scheme. We solved the time-dependent Stoke-Biot system with added external forcing terms. The system is given as follows:
$$
\left\{
\begin{array}{ll}
    \rho_f \partial_t \boldsymbol{u}=\nabla \cdot \boldsymbol{\sigma}_f\left(\boldsymbol{u}, p\right)+\boldsymbol{F}_f & \text { in } \Omega_f \times(0, T), \\
    \nabla \cdot \boldsymbol{u}=g_f & \text { in } \Omega_f \times(0, T), \\
    \partial_t \boldsymbol{\eta}=\boldsymbol{\xi} & \text { in } \Omega_p \times(0, T), \\
    \rho_p \partial_t \boldsymbol{\xi}=\nabla \cdot \boldsymbol{\sigma}_p\left(\boldsymbol{\eta}, \phi\right)+\boldsymbol{F}_e & \text { in } \Omega_p \times(0, T), \\
    \boldsymbol{u_p}=-\mathbb K\nabla \phi & \text { in } \Omega_p \times(0, T), \\
    C_0 \partial_t \phi+\alpha \nabla \cdot \boldsymbol{\xi}-\nabla \cdot (\mathbb K\nabla\phi)=F_d & \text { in } \Omega_p \times(0, T) .
\end{array}
\right.
$$
The FPSI problem is defined within a rectangular domain where the fluid domain occupies the upper half, $\Omega_f = (0,1) \times (0,1)$, and the solid domain occupies the lower half, $\Omega_p = (0,1) \times (-1,0)$. The following physical parameters are used: $\rho_p=$ $\mu_p=\lambda_p=\alpha=C_0=\gamma=\rho_f=\mu_f=1$, and $\mathbb K=\boldsymbol{I}$. The final time is $T=1$. The exact solutions for case 1 are given by:
\begin{align*}
& \boldsymbol{\eta}_{1}=\sin (\pi t)\left[\begin{array}{c}
-3 x+\cos (y) \\
y+1
\end{array}\right], \quad&&\phi_{1}=e^t \sin (\pi x) \cos \left(\frac{\pi y}{2}\right), \\
& \boldsymbol{u}_{1}=\pi \cos (\pi t)\left[\begin{array}{c}
-3 x+\cos (y) \\
y+1
\end{array}\right], \quad&&p_{1}=e^t \sin (\pi x) \cos \left(\frac{\pi y}{2}\right)+2 \pi \cos (\pi t) .
\end{align*}
And the exact solutions for case 2 are:
\begin{align*}
& \boldsymbol{\eta}_{2}=\sin (\pi t)\left[\begin{array}{c}
-3 x+\cos (y) \\
y+1
\end{array}\right], &&\phi_{2}=\sin(\pi t+\frac{\pi}{4}) \sin (\pi x) \cos \left(\frac{\pi y}{2}\right), \\
& \boldsymbol{u}_{2}=\pi \cos (\pi t)\left[\begin{array}{c}
-3 x+\cos (y) \\
y+1
\end{array}\right], &&p_{2}=\sin(\pi t+\frac{\pi}{4})\sin (\pi x) \cos \left(\frac{\pi y}{2}\right)+2 \pi \cos (\pi t) .
\end{align*}
From the exact solutions, we can retrieve the forcing terms of $\boldsymbol{F}_f, g_f, \boldsymbol{F}_e$, and $F_d$. Regarding the boundary conditions, Neumann boundary conditions are enforced on the right side of the fluid domain and on the bottom of the Biot domain for $\phi$, while Dirichlet conditions are used on all other external boundaries. The simulation continued until the final time $T=1$, after which we assessed the numerical error. To compute the rates of convergence, we define the errors for the structure displacement and velocity, the Darcy pressure, and the fluid velocity, respectively, as:
\begin{align*}
e_{i,\boldsymbol{\eta}}=\Vert&\boldsymbol{\eta}-\boldsymbol{\eta}_{i}\Vert_S, \quad e_{i,\boldsymbol{\xi}}=\Vert\boldsymbol{\xi}-\boldsymbol{\xi}_{i}\Vert_{L^2\left(\Omega_p\right)},  \quad e_{i,\phi}=\Vert\phi-\phi_{i}\Vert_{L^2\left(\Omega_p\right)},\\
&\quad e_{i,\boldsymbol{u}}=\Vert\boldsymbol{u}-\boldsymbol{u}_{i}\Vert_{L^2\left(\Omega_f\right)}, \quad \quad e_{i,p}=\Vert p-p_{i}\Vert_{L^2\left(\Omega_f\right)},
\end{align*}
where $i=1$ for Case 1 and 2 for Case 2. To compute the rates of convergence, we choose the following time and space discretization parameters for different choices of $n$ from 4 to 128:
$$
\left\{\Delta t, \Delta x\right\} = \left\{\frac{0.05}{n}, \frac{0.5}{n}\right\},
$$
where $\Delta x$ is the mesh size. Error information and corresponding convergent rate are given in Table \ref{convergence1}, Table \ref{convergence2} and Figure \ref{fig:Error}. We see that our partitioned method reaches the first order accuracy in time for both Stokes and Biot variables without iteration for both Case 1 and Case 2.
\begin{table}[h!]
    \caption{Errors of the Stokes-Biot partitioned algorithm for Case 1 at final time $T=1.0$.}
    \centering
    \begin{tabular}{c|c|c|c|c|c}
    \hline
        $n$ & $e_{1,\boldsymbol{\eta}}$ & $e_{1,\boldsymbol{\xi}}$ & $e_{1,\phi}$ & $e_{1,\boldsymbol{u}}$ & $e_{1,p}$\\
        \hline
        \hline
        4 & 1.34E-01 & 1.28E-01 & 2.42E-02 & 1.34E-02 & 1.75E-01\\
        8 & 6.63E-02 & 6.49E-02 & 5.77E-03 & 6.84E-03 & 8.98E-02\\
       16 & 3.31E-02 & 3.26E-02 & 2.47E-03 & 3.46E-03 & 4.55E-02\\
       32 & 1.65E-02 & 1.64E-02 & 1.22E-03 & 1.74E-03 & 2.29E-02\\
       64 & 8.27E-03 & 8.21E-03 & 6.18E-04 & 8.75E-04 & 1.15E-02\\
      128 & 4.14E-03 & 4.11E-03 & 3.13E-04 & 4.38E-04 & 5.76E-03\\
 \hline
    \end{tabular}
    \label{convergence1}
\end{table}

\begin{table}[h!]
    \caption{Errors of the Stokes-Biot partitioned algorithm for Case 2 at final time $T=1.0$.}
    \centering
    \begin{tabular}{c|c|c|c|c|c}
    \hline
        $n$ & $e_{2,\boldsymbol{\eta}}$ & $e_{2,\boldsymbol{\xi}}$ & $e_{2,\phi}$ & $e_{2,\boldsymbol{u}}$ & $e_{2,p}$\\
        \hline
        \hline
       4 & 1.66E-01 & 1.25E-01 & 1.57E-02 & 1.41E-02 & 2.12E-01\\
       8 & 8.49E-02 & 6.36E-02 & 6.60E-03 & 7.24E-03 & 1.06E-01\\
      16 & 4.29E-02 & 3.21E-02 & 3.12E-03 & 3.67E-03 & 5.32E-02\\
      32 & 2.16E-02 & 1.61E-02 & 1.53E-03 & 1.85E-03 & 2.66E-02\\
      64 & 1.08E-02 & 8.08E-03 & 7.56E-04 & 9.29E-04 & 1.33E-02\\
     128 & 5.43E-03 & 4.05E-03 & 3.76E-04 & 4.65E-04 & 6.66E-03\\
 \hline
    \end{tabular}
    
    \label{convergence2}
\end{table}
\begin{figure}[h!]
  \centering
  \includegraphics[scale=0.052]{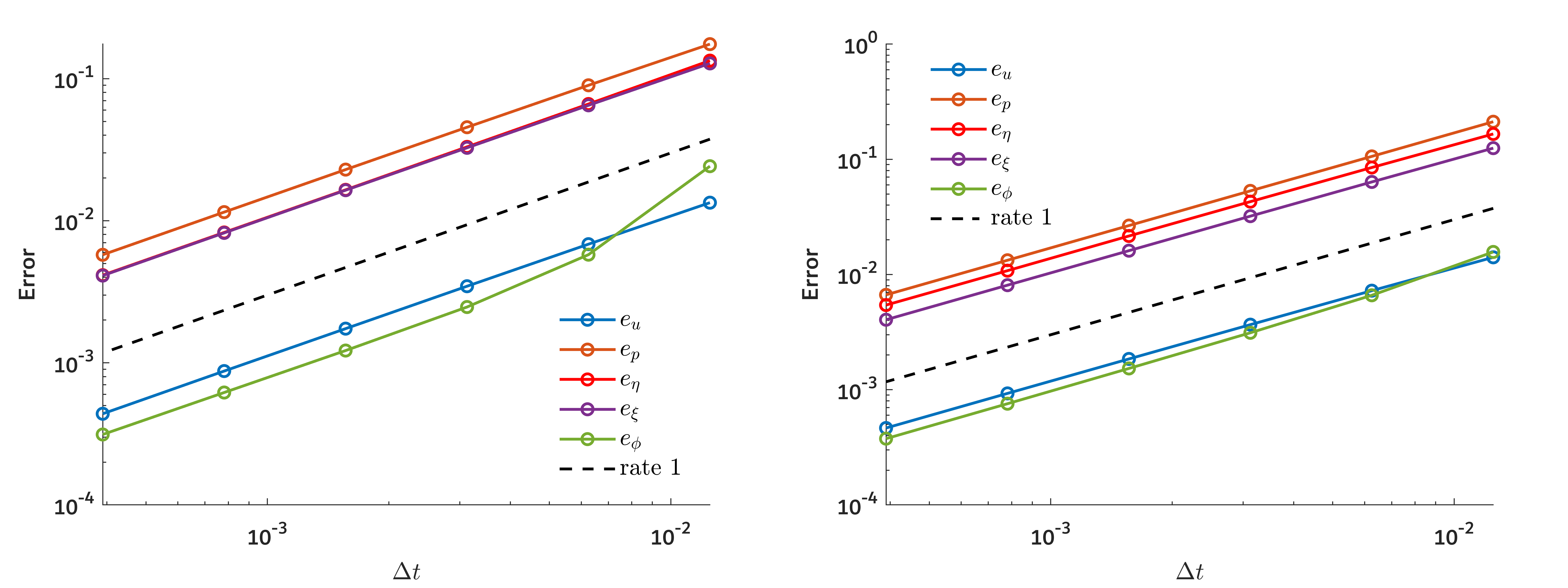}
  \caption{Convergence rate in time of the proposed decoupled algorithm for Case 1 (left) and Case 2 (right) at final time $T=1.0$.}
  \label{fig:Error}
\end{figure}

\subsection{2D fluid flow through a channel with poroelastic obstacles}
\label{2dexample}
In this section, we consider a $2D$ NS channel flow featuring two poroelastic obstacles on either side of the channel near the inlet. A sketch of this geometry is shown in Figure \ref{fig:domain_2d}. We impose no-slip boundary condition for upper and lower sides channel walls, while a natural outflow condition, $\boldsymbol{\sigma}_f\boldsymbol{n}_f=0$, is used for the outlet. 
For top and bottom boundaries of Biot domains, we set $\boldsymbol{\hat{\eta}}=\boldsymbol{\hat{\xi}}=\boldsymbol{0}$ and $\hat{\nabla}\hat{\phi}\cdot\boldsymbol{\hat{n}}_p=\boldsymbol{0}$ for no penetration (no-flux) conditions. This geometry can be perceived as a narrowing artery with stenosis or a narrow tunnel in intertidal zone, formed by the growth of Spartina. 
\begin{figure}[h!]
    \centering
    \includegraphics[width=1.0\linewidth]{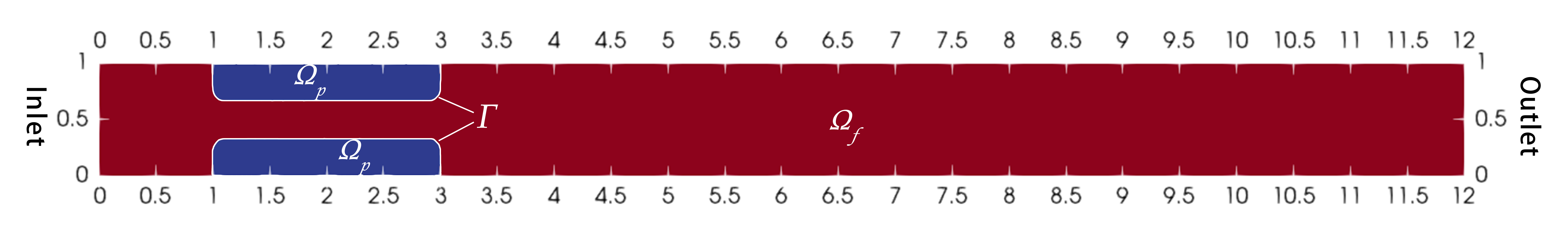}
    \caption{An illustration of the $2D$ computation domain, where the region in red $\Omega_f$ stands for the fluid region, and regions in blue $\Omega_p$ denotes the poroelastic region. The size of the computational domain is $1.0\ cm \times 12.0\ cm$.}
    \label{fig:domain_2d}
\end{figure}

The parameters used for fluids simulations are as follows: the kinematic viscosity is set to $\nu=0.01\text{ cm}^2$/s and density (per $2D$ unit area) is $\rho_f=1.0$ g/c$\text{m}^2$. For poroelastic structures, we took density to be $\rho_p=1.2$ g/c$\text{m}^2$, and two Lame's parameters to be $\mu_p=1.0336\times10^3$ dyne/cm and $\lambda_p=4.9364\times10^4$ dyne/cm, respectively. The pressure storage coefficient is taken as $C_0 = 10^{-3}~ cm/dyne$, and the permeability $\mathcal{K}=10^{-3}~cm^2s/g$. The slip constant is $\gamma=1/\sqrt{\mathcal{K}}$ $ 
g/(cm \cdot s)$. We also set the Biot-Willis parameter $\alpha$ to 1. This problem was solved with $\Delta t=10^{-3}$ s, and the final time $T=5$ s. Furthermore, we prescribe the inlet velocity to be a parabolic profile $\boldsymbol{u}=(20x(1-x), 0)$ in horizontal direction.

\begin{figure}[htb]
    \centering
    \includegraphics[width=1.0\linewidth]{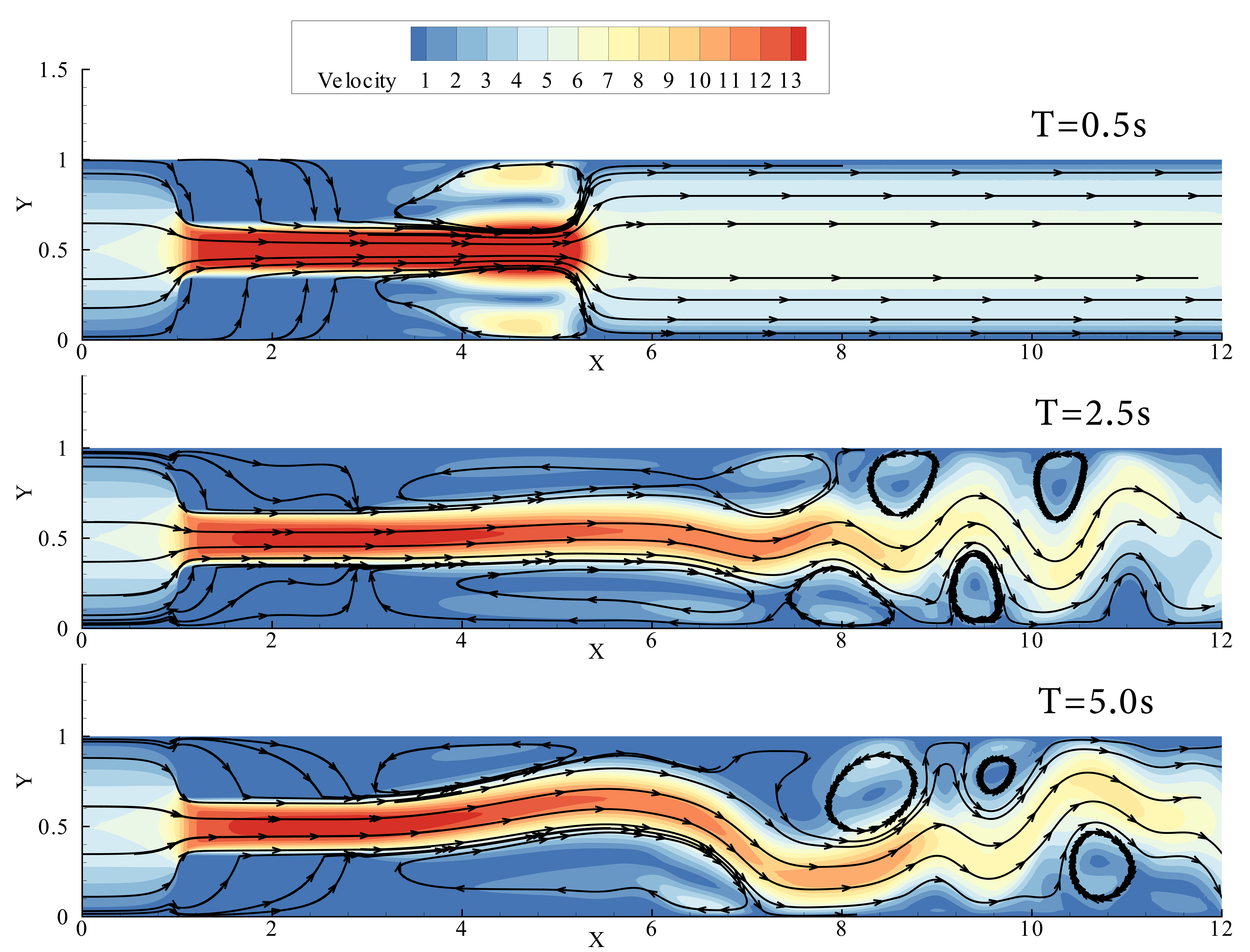}
    \caption{Superimposed streamlines and velocity magnitudes for fluid flow in the channel and Darcy flows through the poroelastic obstacles at $T=0.5$ s (top), $T=2.5$ s (middle) and $T=5.0$ s (bottom). The velocity is in the unit of $cm/s$.}
    \label{fig:streamline_2d}
\end{figure}

Figure \ref{fig:streamline_2d} shows the superimposed streamlines and velocity magnitudes for both fluids flow and Darcy flow in poroelastic obstacles at three distinct time moments. Streamlines remain consistent across the interface between fluids and poroelastic regions, demonstrating a reasonable behavior. In the latter half of the channel, new vortices unremittingly form and break as time advances, illustrating the chaotic and oscillatory nature of the fluid flow. Regarding the fluid pressure and Darcy pressure throughout the computational domain, we note that pressure remains continuous across the interface, making it hard to distinguish poroelastic regions from fluid region, as illustrated in Figure \ref{fig:pressure_2d}. Additionally, Figure \ref{fig:displacement_2d} shows the displacement magnitudes of the two Biot obstacles at selected time steps, indicating that deformation is most pronounced in the lower and upper left corners due to the influence of the fluids. The deformation of poroelasticity domain reaches its peak at $t = 0.5$ s and then gradually stablizes from $t = 2.5$ s to $t = 5.0$ s. Note that a scale factor of 10 is applied for visualization purposes.
\begin{figure}[h!]
    \centering
    \includegraphics[width=1.0\linewidth]{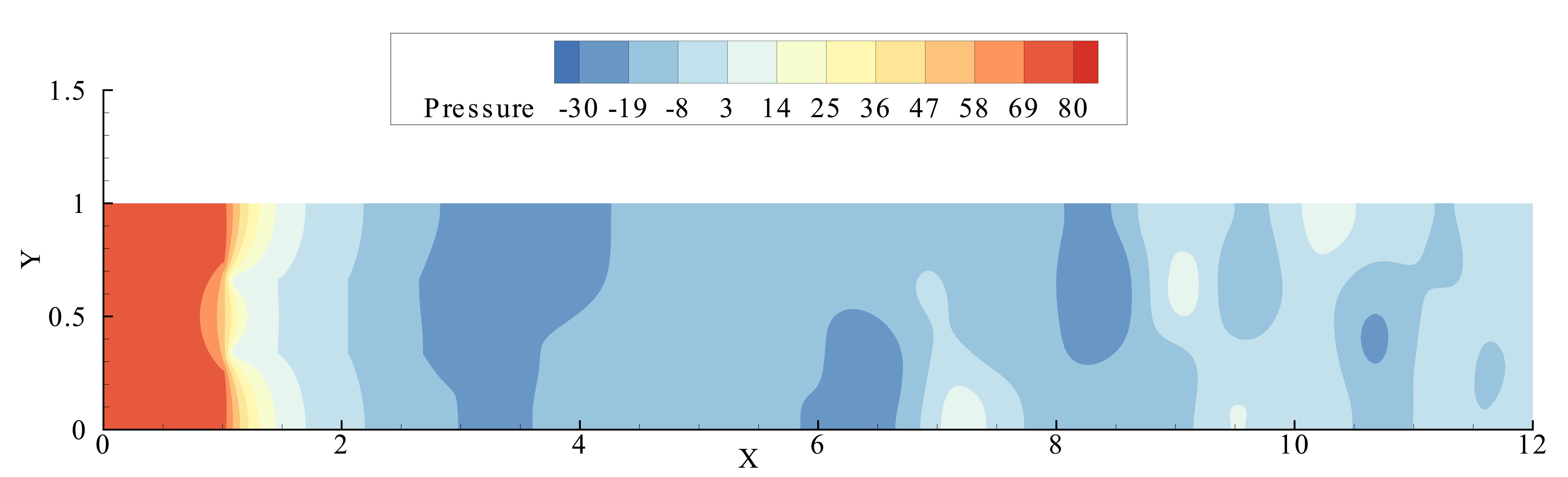}
    \caption{Fluid pressure and Darcy pressure at final time $T=5.0$ s. The pressure is in units of $dynes/cm^2$.}
    \label{fig:pressure_2d}
\end{figure}
\begin{figure}[h]
    \centering
    \includegraphics[width=1\linewidth]{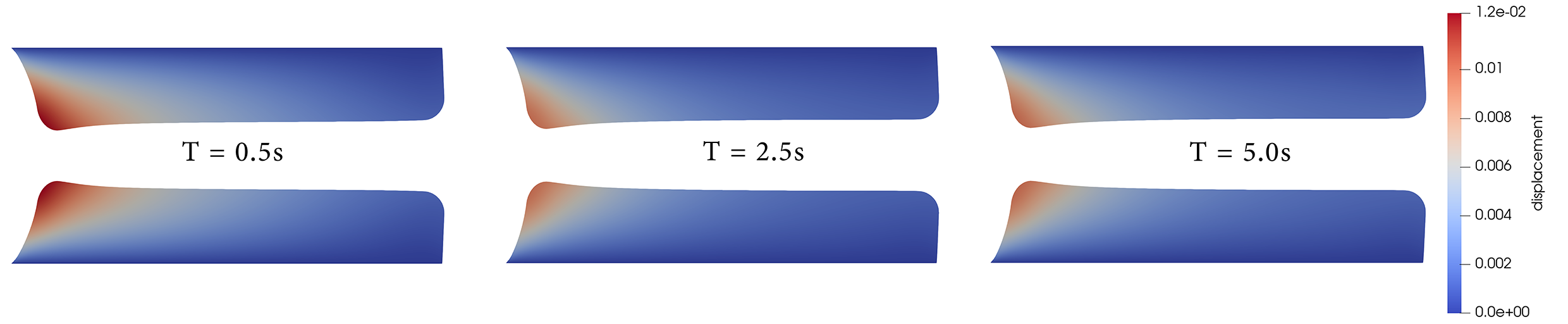}
    \caption{Displacement magnitudes for two poroelastic obstacles, scaled by a factor of 10, at time $T=0.5$ s (left), $T=2.5$ s (middle), and $T=5.0$ s (right). The displacement is in units of $cm$.}
    \label{fig:displacement_2d}
\end{figure}

\subsection{3D simulation of the blood flow through a microfluidic chip with cylindrical poroelastic obstacles} 
To illustrate the stability and effectiveness of our numerical approach, this section presents a $3D$ analysis of flow dynamics within a microfluidic chip, depicted in Figure \ref{fig:domain_3d}. The chip measures 2 cm $\times$ 4.2 cm $\times$ 0.7 cm, with specific dimensional details provided in the figure. The device includes a single inlet on the left for blood entry and dual outlets on the right for discharge.There is a singular inlet on the left for water ingress, and dual outlets on the right for water egress. Within the chip, 10 pillars are strategically placed, all made from poroelastic materials (hydrogels or polymer matrix composites) of varying sizes—six pillars are 0.2 cm in diameter and four are 0.15 cm in diameter. Both the top and bottom ends of these pillars are attached to the chip walls, enforcing $\boldsymbol{\hat{\eta}}=\boldsymbol{\hat{\xi}}=\boldsymbol{0}$ and $\hat{\nabla}\hat{\phi}\cdot\boldsymbol{\hat{n}}_p=0$ for no-flux conditions. The physical parameters remain the same as those used in the $2D$ problem in Section \ref{2dexample}. We set the time step $\Delta t = 0.001$ s, and continued the simulation until a steady state was achieved at $T = 2.0$ s.

Figure \ref{fig:domain_3d} shows that the geometry is discretized using a non-conformal mesh, meaning the fluid and structure domain meshes are not matched at the interface. Our scheme solves both domains simultaneously at each time step, making it crucial that both processes finish in similar CPU times. This allows for localized grid refinement in just one domain while maintaining a balanced workload for both, preventing delays from one domain requiring significantly more processing time. By distributing the workload evenly, the scheme fully leverages parallelization, ensuring efficient use of computational resources and maximizing speedup.
\begin{figure}[htb]
    \centering
\includegraphics[width=1.0\linewidth]{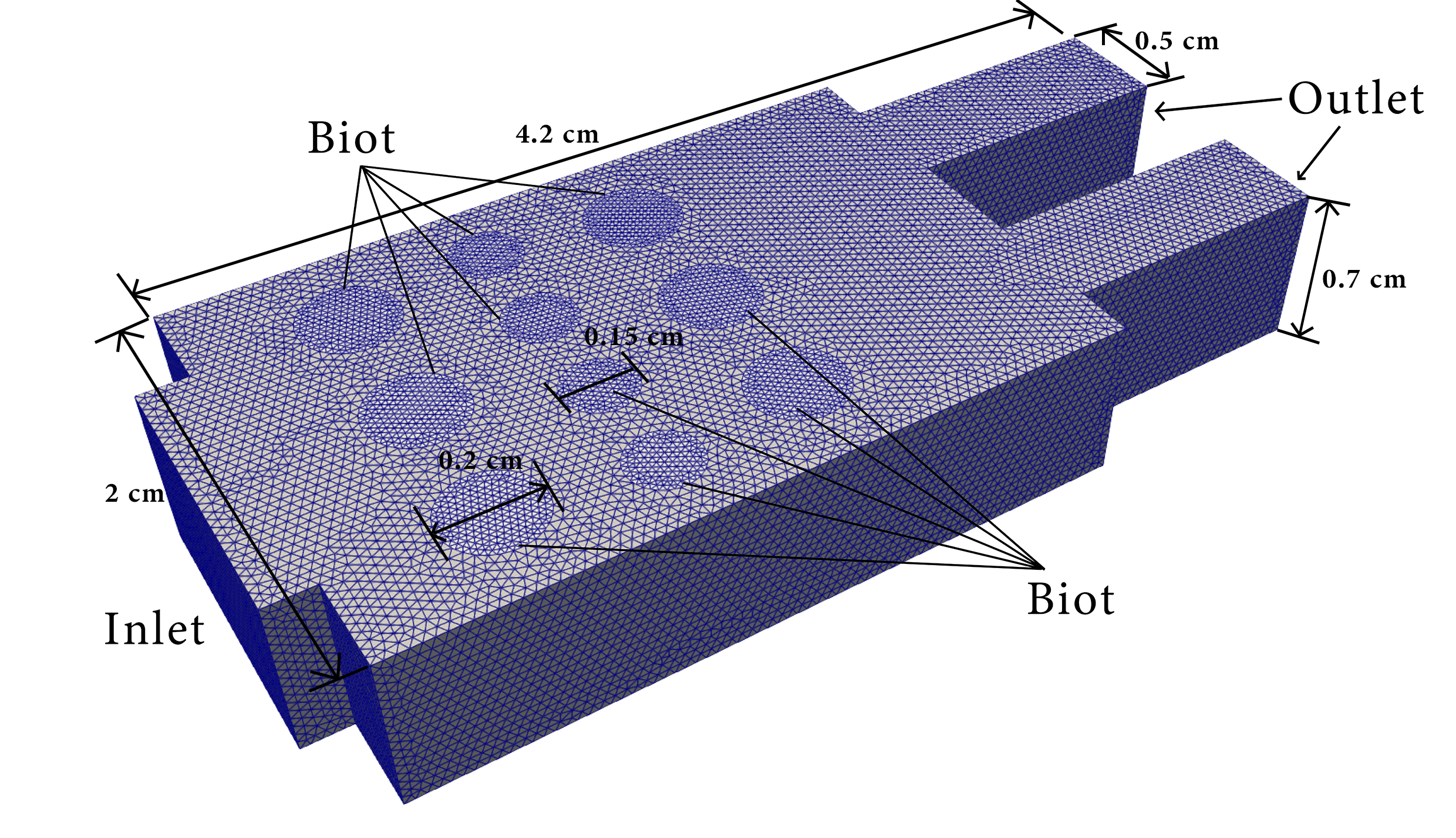}
\caption{A representation of the $3D$ computational domain is provided, where the blood flow region and poroelastic pillars are discretized using non-conformal tetrahedral elements. This results in 275,366 elements for the fluid domain and 156,656 elements for the Biot pillars.}
\label{fig:domain_3d}
\end{figure}

Figure \ref{fig:streamline_3d1} depicts the velocity streamlines of blood flow as it moves around the poroelastic pillars and through the channels, with color contours indicating the fluid velocity magnitude. Higher velocities (red/orange) are concentrated near the fluid inlet and in less obstructed areas, while lower velocities (green/blue) are seen near the pillar surfaces due to increased flow resistance. Vortices form behind each pillar and near the two exits. The displacement of the pillars is also shown, illustrating their deformation under the influence of blood flow. The greatest displacements, reaching approximately $0.0046$ cm, occur around the centers of the smaller pillars, where the fluid velocity reaches the highest. Figure \ref{fig:pressure_3d} illustrates the vorticity magnitude and pressure distribution within the microfluidic chip. In the vorticity subplot, the red regions indicate areas of intense rotational flow, while the blue regions represent areas of lower vorticity. We observed that the areas around the circular pillars show higher vorticity, especially near the edges where flow shear force interacts with pillar surfaces. The bottom plot displays the pressure distribution, in which we observed that areas at the upstream of each pillar experience higher pressure (in red), while the pressure decreases near the downstream. Moreover, the pressure is continuous across the fluid and structure region which satisfies our coupling condition. 
\begin{figure}[htb]
    \centering
    \includegraphics[width=1.0\linewidth]{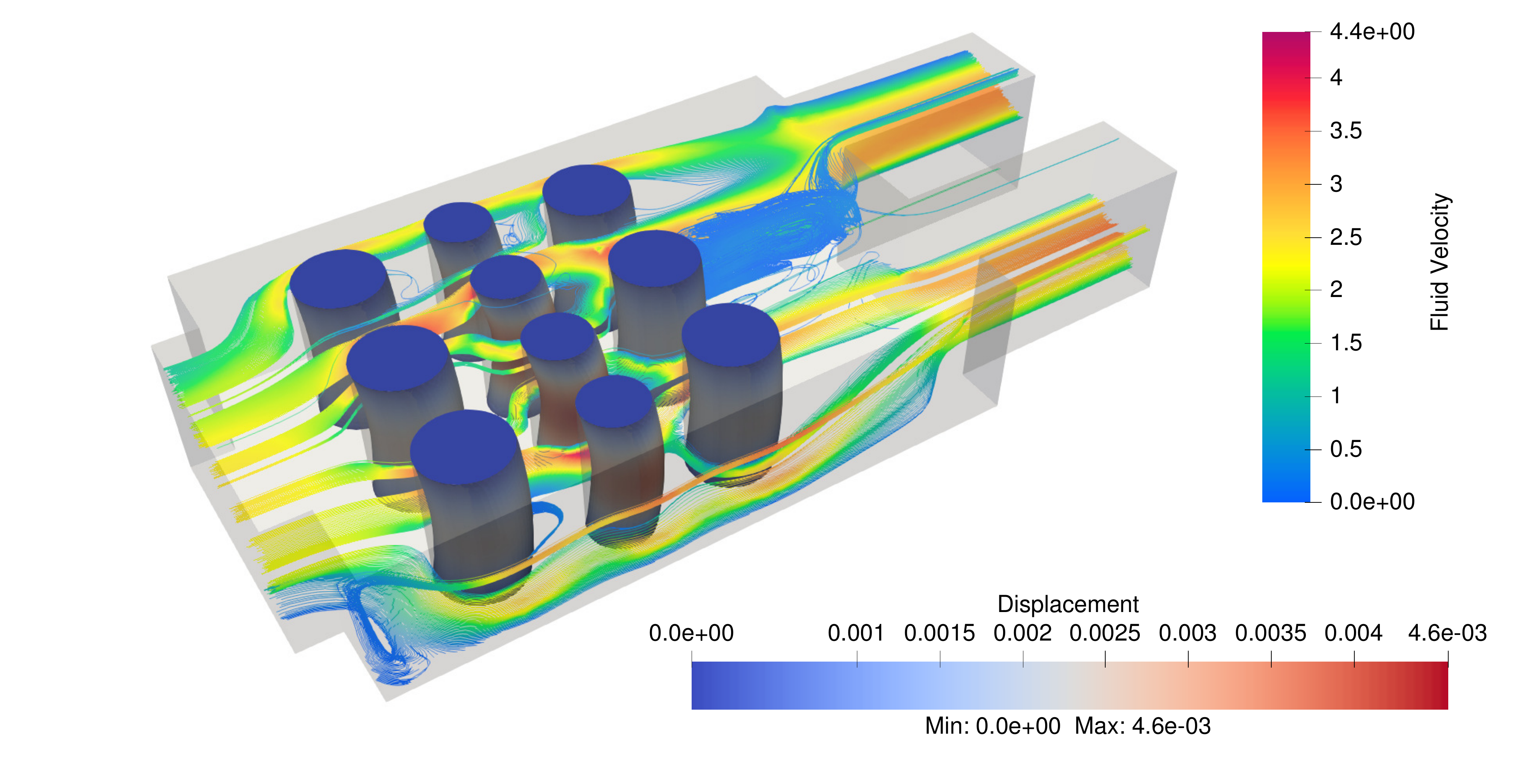}
    \caption{Visualization of fluid flow streamlines and pillar displacements in a $3D$ microfluidic chip at the steady state $T=2.0$ s. The velocity is measured in units of $cm/s$, while the displacement is in units of $cm$. For visualization purposes, the displacement has been magnified by a factor of 10.}
    \label{fig:streamline_3d1}
\end{figure}
\begin{figure}[htb]
    \centering
    \includegraphics[width=1.0\linewidth]{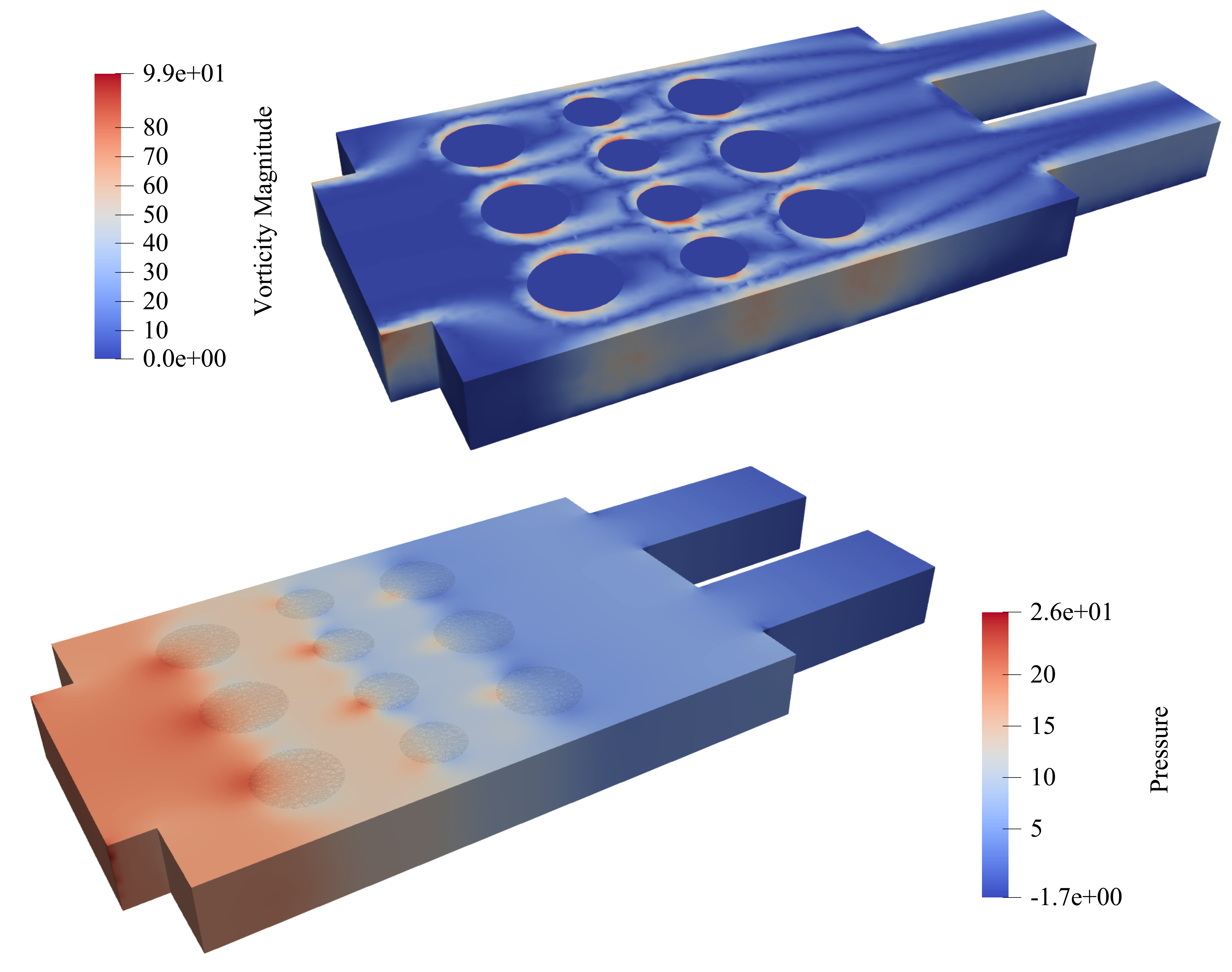}
\caption{Vorticity magnitude (top) and pressure distribution (bottom) for fluid flow around poroelastic pillars at steady state $T=2.0$ s. The vorticity is in the unit of $s^{-1}$ and pressure is in the unit of $dynes/cm^2$. The vorticity plot highlights regions of high rotational flow near the obstacles, with the highest values observed around the obstacle edges. The pressure plot shows elevated pressure near the inflow and obstacle surfaces, with a gradual decrease toward the channel exits.}
\label{fig:pressure_3d}
\end{figure}
\section{Conclusion}
\label{sec:main6}
In this work, we considered a coupled system in a moving domain, where the fluid is governed by the Navier-Stokes equations in the ALE formulation, and the poroelastic material is modeled by the Biot equation. To solve this problem, a fully parallelizable and loosely coupled method was proposed based on Robin-Robin type interface conditions, which were derived from modifying the original interface conditions. Theorem \ref{Equivalency} establishes the equivalence between the original solution and that of the reformulated problem. As outlined in Algorithm \ref{semi}, the solution from the previous time step is utilized to approximate the reformulated interface conditions at the current time step. This allows the coupled NSBiot system to be fully decomposed into separate fluid and Biot subproblems. Since these two subproblems are uncoupled, they can be solved independently without iteration at each time step. 

We investigated the stability of the proposed numerical scheme in the context of a linearized problem, specifically Algorithm \ref{fixedAL}, and demonstrated that it is unconditionally stable. In contrast to some existing methods that either rely on viscoelastic model assumptions or impose restriction on the time step $\Delta t$ for certain physical parameters, our algorithm exhibits robust stability without requiring viscoelasticity. For the parameters in the Robin boundary conditions, we provided reference values based on both numerical considerations and prior work on Stokes-Darcy problems \cite{cao2014parallel}. Additionally, based on solving two benchmark problems with analytic solutions, we observed first-order accuracy in time for this loosely coupled scheme. This approach significantly reduces the computational burden of the original problem by decomposing the coupled system into two subproblems.

To demonstrate the robustness and superb stability property for handling real physical problems in moving domains, we applied Algorithm \ref{semi} to both $2D$ and $3D$ problems involving complex geometries. Notably, for both $2D$ and $3D$ cases, we employed non-conformal meshes, namely the mesh on the interface is not aligned. Since our scheme solve both subproblems at the same time, it is crucial that the two subproblems are handled in comparable CPU times; otherwise, one process may idle while waiting for the other to complete. The use of non-conformal meshes allows for local grid refinement, enabling balanced computational workload among the CPUs. In both $2D$ and $3D$ simulations, we used a relatively large time step size of $\Delta t = 10^{-3}$, comparable to that used by some monolithic solvers, and observed no stability issues. 

In the last, we would also like to acknowledge a limitation of this work: the stability analysis is conducted under the assumption of a linearized problem (in the fixed domain and the fluid problem is governed by the Stokes equation). Nevertheless, the $2D$ and $3D$ examples presented demonstrate the stability and robustness of our numerical scheme for handling the nonlinear fluid poroelastic interaction problem in a moving domain. Future work will focus on an error estimate of the scheme, as well as its potential application to other classical fluid-structure interaction problems and other multiphysics scenarios.

\section*{Acknowledgments}
The third author's research was partially supported by NSF DMS-2247001. The fifth author's research was partially supported by  NSF of China (No.12471406) and the Science and Technology Commission of Shanghai Municipality (Grant Nos. 22JC1400900, 22DZ2229014). 

The authors acknowledge the High-Performance Computing Center (HPCC) at Texas Tech University for providing computational resources (http://www.hpcc.ttu.edu).
\bibliographystyle{plain}
\bibliography{main}
\end{document}